\theoremstyle{plain}
\newtheorem{theorem}{Theorem}[section]
\newtheorem{lemma}[theorem]{Lemma}
\newtheorem{proposition}[theorem]{Proposition}
\newtheorem{corollary}[theorem]{Corollary}
\theoremstyle{definition}
\newtheorem{remark}[theorem]{Remark}
\newtheorem{example}[theorem]{Example}
\newtheorem{question}{Question}
\renewcommand\emptyset{\varnothing}
\newcommand{\RR}{\mathbb{R}}
\newcommand{\ZZ}{\mathbb{Z}}
\newcommand{\conv}{\operatorname{conv}}
\newcommand{\cone}{\operatorname{cone}}
\newcommand\Def[1]{\textbf{#1}}
\newcommand\Ind{\mathcal{I}}
\newcommand\Bases{\mathcal{B}}
\newcommand\Cocircuits{\mathcal{C}^*}
\newcommand\Zero{\boldsymbol{0}}
\newcommand{\TTmax}{\mathbb{T}_{\max}}
\newcommand\tconv{\operatorname{tconv}}
\newcommand\SetOf[2]{\left\{\left.#1\vphantom{#2}\ \right|\ #2\vphantom{#1}\right\}}
\newcommand\neighbour{\mathcal{N}}
\newcommand{\nbhdG}{\mathcal{N}_G}
\DeclareMathOperator{\supp}{supp}
\title{Tropical Carath\'eodory with Matroids}
\author{Georg Loho}
\address{Department of Mathematics\\
  London School of Economics and Political Science\\
  Houghton Street\\
  London\\
  WC2A 2AE\\
  UK}
\email{g.loho@lse.ac.uk}
\author{Raman Sanyal}
\address{FB 12 - Institut für Mathematik\\
  Goethe-Universität Frankfurt\\
  Robert-Mayer-Str. 10\\
  D-60325 Frankfurt am Main\\
  Deutschland}
\email{sanyal@math.uni-frankfurt.de}
\keywords{Colorful Carath\'eodory theorem, matroid Carath\'eodory theorem,
tropical convex geometry}
\subjclass[2010]{%
52A35, %
05B35, %
14T05} %
\date{\today}
\begin{document}
\begin{abstract}
    B\'ar\'any's colorful generalization of Carath\'eodory's Theorem combines
    geometrical and combinatorial constraints. Kalai--Meshulam (2005) and
    Holmsen (2016) generalized B\'ar\'any's theorem by replacing color classes
    with matroid constraints. In this note, we obtain corresponding results in
    tropical convexity, generalizing the tropical colorful Carath\'eodory
    Theorem of Gaubert--Meunier (2010). Our proof is inspired by geometric
    arguments and is reminiscent of matroid intersection. In particular, we
    show that the topological approach fails in this setting.

    We also discuss tropical colorful linear programming and show that it is
    NP-complete. We end with thoughts and questions on generalizations to
    polymatroids, anti-matroids as well as examples and matroid simplicial
    depth.
\end{abstract}

\maketitle

\section{Introduction}\label{sec:intro}

Imre B\'ar\'any's colorful version of the Carath\'eodory
theorem is a gem of discrete geometry.

\begin{theorem}[{\cite[Thm.~2.1]{Barany:1982}}]\label{thm:Barany}
    Let $C_1,\dots,C_{d+1} \subset \RR^d$ be finite sets of points such that
    $0$ is contained in the convex hull $\conv(C_i)$ for all $i=1,\dots,d+1$.
    Then there are $p_i \in C_i$ for $i=1,\dots,d+1$ such that $0 \in
    \conv(p_1,\dots,p_{d+1})$.
\end{theorem}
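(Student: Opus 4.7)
The plan is to prove the theorem by a local-improvement argument on colorful simplices. There are only finitely many colorful simplices $T = \conv(p_1, \dots, p_{d+1})$ with $p_i \in C_i$, so I pick one $T^* = \conv(p_1^*, \dots, p_{d+1}^*)$ that minimizes the Euclidean distance from $0$ to $T^*$, and I aim to show that this minimum must in fact be $0$.

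Assume for contradiction that $0 \notin T^*$, and let $q$ be the nearest point of $T^*$ to $0$; then $q \neq 0$. The hyperplane $H := \{x \in \RR^d : \langle x, q\rangle = \|q\|^2\}$ passes through $q$, supports $T^*$ (which lies in the closed half-space $\{\langle x, q\rangle \geq \|q\|^2\}$), and places $0$ strictly on the opposite side because $\langle 0, q\rangle = 0 < \|q\|^2$. Since $q$ lies on the boundary of $T^*$ (or $T^*$ is itself contained in a proper affine subspace, in which case Carathéodory's theorem applies inside that subspace), I may write $q = \sum_{j \neq i} \lambda_j p_j^*$ as a convex combination omitting some vertex $p_i^*$; in the generic case this index $i$ is simply any one with $p_i^* \notin H$.

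Because $0 \in \conv(C_i)$ lies strictly on the $0$-side of $H$, not all of $C_i$ can satisfy $\langle \cdot, q\rangle \geq \|q\|^2$; hence there exists $p_i' \in C_i$ with $\langle p_i', q\rangle < \|q\|^2$. Replacing $p_i^*$ by $p_i'$ produces a new colorful simplex $T'$ that still contains $q = \sum_{j \neq i} \lambda_j p_j^*$ as well as the new vertex $p_i'$, hence contains the whole segment between them. The expansion
\[
    \bigl\|(1 - \alpha) q + \alpha p_i'\bigr\|^2 = \|q\|^2 + 2\alpha\bigl(\langle q, p_i'\rangle - \|q\|^2\bigr) + O(\alpha^2)
\]
shows this segment dips strictly below $\|q\|$ for small $\alpha > 0$, producing a colorful simplex strictly closer to $0$ than $T^*$ and contradicting minimality.

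The step I expect to be most delicate is the choice of $i$: it must simultaneously allow $q$ to be expressed as a convex combination that does not use $p_i^*$, and admit some $p_i' \in C_i$ on the $0$-side of $H$ to take its place. The second requirement follows cleanly from $0 \in \conv(C_i)$ for each $i$ individually, while the first is the reason for invoking Carathéodory in the otherwise awkward degenerate situation $T^* \subseteq H$; unifying these two cases is where the proof requires a small amount of care rather than a new idea.
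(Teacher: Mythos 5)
Your proof is correct and is essentially the argument the paper itself uses: the paper does not reprove B\'ar\'any's theorem directly but proves the more general matroid version (Theorem~\ref{thm:KM}) by exactly this distance-minimizing exchange --- nearest point $q$, supporting hyperplane $H_q$, a vertex not needed to represent $q$, and a replacement from its color class (cocircuit) on the origin's side of $H_q$ --- explicitly ``extending the original arguments of B\'ar\'any.'' Your handling of the degenerate case $T^* \subseteq H$ via Carath\'eodory inside $H$ is the right fix for the one delicate step.
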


The sets $C_i$ are called \Def{color classes} and $\{p_1,\dots,p_{d+1}\}$ is
called a \Def{colorful simplex}. If $C_1 = \cdots = C_{d+1}$ then this
recovers Carath\'eodory's original result but B\'ar\'any's theorem exhibits a
much more intricate and beautiful interplay of geometry and combinatorics.

\newcommand\rk{\rho}%
There have been many generalizations of Theorem~\ref{thm:Barany} mostly
weakening the prerequisites for the existence of a colorful simplex containing
the origin; see Section~\ref{sub:holmsen} as well
as~\cite{HolmsenPachTverberg:2008, ArochaBaranyBrachoFabilaMontejano:2009,
BokowskiBrachoStrausz:2011}.  Kalai and Meshulam~\cite{KalaiMeshulam:2005}
gave a different generalization of the Colorful Carath\'eodory Theorem by
interpreting the colorful condition in terms of matroids. A \Def{matroid} $M =
(E,\Ind)$ consists of a finite ground set $E$ and a non-empty collection of
subsets $\Ind \subseteq 2^E$ satisfying the following conditions: $\Ind$ is
closed under taking subsets and for any $I,J \in \Ind$ with $|I| < |J|$ there
is $e \in J \setminus I$ such that $I\cup e \in \Ind$. The \Def{rank function}
associated to $M$ is defined as
\[
    \rk(A) \ := \ \max( |I| : I \subseteq A, I \in \Ind ) \, .
\]
We refer the reader to~\cite{Oxley:2011} for more on matroids.

\begin{theorem}[{\cite[Corollary~1.4]{KalaiMeshulam:2005}}] \label{thm:KM}
    Let $M$ be a matroid on ground set $E$ and $V : E \to \RR^d$. Assume that
    $0 \in \conv(V(S))$ for every $S \subseteq E$ with $\rho(S) = \rho(E)$ and
    $\rho(E\setminus S) \le d$. Then there exists an independent set $T \in
    \Ind$ such that $0 \in \conv(V(T))$.
\end{theorem}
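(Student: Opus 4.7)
The plan is to mimic B\'ar\'any's minimum-distance argument for Theorem~\ref{thm:Barany}, replacing ``swap within a color class'' by a fundamental-circuit exchange in $M$, and to close the argument by exhibiting a set $S$ of the form $T^* \cup (E \setminus \operatorname{cl}_M(T^*))$ that contradicts the Kalai--Meshulam hypothesis.

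Suppose for contradiction that no $T \in \Ind$ satisfies $0 \in \conv V(T)$, and pick a basis $B^*$ of $M$ minimizing $d(0, \conv V(B^*))$. Let $p^* \ne 0$ be the nearest point and let $T^* \subseteq B^*$ be an inclusion-minimal subset with $p^* \in \operatorname{relint}(\conv V(T^*))$. Then $V(T^*)$ lies on the supporting hyperplane $H = \{x : \langle p^*, x\rangle = \|p^*\|^2\}$, $V(B^* \setminus T^*)$ lies strictly in the open half-space $H^+$, and consequently $|T^*| \le d$.

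The key geometric/matroidal swap step I would establish is: every $e \in E$ with $\langle p^*, V(e)\rangle < \|p^*\|^2$ lies in $\operatorname{cl}_M(T^*)$. Otherwise $T^* \cup \{e\}$ is independent, so the fundamental circuit $C(e, B^*)$ meets $B^* \setminus T^*$ in some element $f$, and $B' := (B^* \setminus f) \cup \{e\}$ is a basis containing $T^* \cup \{e\}$. Since $p^* \in \conv V(T^*) \subseteq \conv V(B')$, the segment from $p^*$ to $V(e)$ lies in $\conv V(B')$; the derivative of $t \mapsto \|(1-t)p^* + tV(e)\|^2$ at $t = 0$ equals $2(\langle p^*, V(e)\rangle - \|p^*\|^2) < 0$, so this segment yields a point in $\conv V(B')$ strictly closer to $0$ than $p^*$, contradicting the choice of $B^*$.

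Given the swap step, set $S := T^* \cup (E \setminus \operatorname{cl}_M(T^*))$. Since $B^*$ is independent, no element of $B^* \setminus T^*$ can lie in $\operatorname{cl}_M(T^*)$, hence $B^* \subseteq S$ and $\rho(S) = \rho(E)$. Moreover $E \setminus S = \operatorname{cl}_M(T^*) \setminus T^*$ has rank at most $|T^*| \le d$, so $S$ satisfies both hypotheses of the theorem and $0 \in \conv V(S)$ is forced. But every $e \in S$ satisfies $\langle p^*, V(e)\rangle \ge \|p^*\|^2 > 0$ (by the swap step applied to $e \notin \operatorname{cl}_M(T^*)$), so $0 \notin \conv V(S)$ --- the desired contradiction. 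The hard part is the swap step itself: the minimality of $B^*$ has to be combined with fundamental-circuit exchange to produce a new basis that still contains all of $T^* \cup \{e\}$, so that the decreasing segment $p^* \to V(e)$ actually survives in $\conv V(B')$; everything else is bookkeeping on ranks and matroid closures.
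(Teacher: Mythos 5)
Your argument is correct, and at its core it is the same Bárány-style strategy the paper uses: minimize the distance from the origin over all bases, take the supporting hyperplane at the nearest point, and improve by a single basis exchange that brings in an element on the origin's side. The genuine difference is in how the hypothesis is deployed. The paper first truncates to $\rho(E)=d+1$ and reformulates the rank conditions as ``$0\in\conv V(B\cup C)$ for every basis $B$ and cocircuit $C$'' (Theorem~\ref{thm:reformulated+KM}); the improvement step then takes a \emph{fundamental cocircuit} of $B$ at an element of $B\setminus I$ and uses Lemma~\ref{lem:augment+transversal} to certify that the swap yields a basis. You instead keep the original rank conditions, run the exchange through \emph{fundamental circuits}, and close the argument by exhibiting the explicit set $S=T^*\cup(E\setminus\operatorname{cl}_M(T^*))$, which satisfies $\rho(S)=\rho(E)$ and $\rho(E\setminus S)\le|T^*|\le d$ yet lies entirely in the closed halfspace $\langle p^*,x\rangle\ge\|p^*\|^2$ avoiding the origin. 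These are dual bookkeepings of the same exchange; yours buys a direct proof of the stated theorem with no truncation and no cocircuit lemma, while the paper's buys the basis/cocircuit reformulation, which is exactly the template reused for the tropical Theorem~\ref{thm:tropicalKM}.

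Two small imprecisions, neither of which affects the proof: the claim that $V(B^*\setminus T^*)$ lies \emph{strictly} in the open halfspace need not hold (other points of $V(B^*)$ may also sit on $H$), but you never use it; and in the swap step you should note that $e\notin B^*$ (so that the fundamental circuit $C(e,B^*)$ is defined), which is automatic since every $e\in B^*$ already satisfies $\langle p^*,V(e)\rangle\ge\|p^*\|^2$.
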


If $E = E_1 \sqcup E_2 \sqcup \cdots \sqcup E_{d+1}$ and $I \in \Ind$ if and
only if $|I \cap E_i| \le 1$ for all $1 \le i \le d+1$, then $M$ is called a
\Def{partition matroid} and Theorem~\ref{thm:KM} implies
Theorem~\ref{thm:Barany} with color classes $C_i = V(E_i)$.  In fact,
Theorem~\ref{thm:KM} yields a slightly stronger version; see discussion
preceding Corollary~\ref{cor:Barany_stronger}. The rank conditions are a bit
tricky to interpret (but quite natural to the proof
in~\cite{KalaiMeshulam:2005}) and we offer a simple combinatorial
reformulation in Section~\ref{sec:matroids} and Theorem~\ref{thm:tropicalKM}
below.

Other geometric setups in which the (Colorful) Carath\'eodory theorem holds
have been considered, for example tropical geometry. The \Def{(max-)tropical
semiring} (or \Def{max-plus-semiring}) is the set $\TTmax = \RR \cup
\{-\infty\}$ together with tropical addition and multiplication given by $a
\oplus b := \max(a,b)$ and $a \odot b := a+b$. \emph{Tropical mathematics} is
a comparatively young but vibrant area of research that offers new exciting
perspectives in virtually all areas of mathematics including algebra,
geometry, game theory, and optimization. For tropical convex geometry, one
defines the \Def{tropical convex hull} of a finite set $V = \{ v^{1}, \dots,
v^{n} \} \subset \TTmax^d$ by
\begin{equation} \label{eq:convex+hull}
    \tconv{V} \ := \ \SetOf{\bigoplus_{j=1}^{n} \lambda_j \odot v^{j}}{\lambda_j
    \in \TTmax \ , \ \bigoplus_j \lambda_j = 0} \enspace .
\end{equation}

We refer the reader to~\cite{Butkovic:2010,
AllamigeonBenchimolGaubertJoswig:2015} for a thorough introduction to tropical
convexity and its connection with classical convexity. Tropical convexity
turns out to be useful in gaining new insights into convex optimization;
cf.~\cite{AllamigeonBenchimolGaubertJoswig:2018}. The combinatorics of
tropical convex hulls is further studied in~\cite{Joswig:2020}. 
A first goal of this paper is the study of matroid generalizations of the
Colorful Carath\'eodory Theorem in tropical convex geometry. In
Section~\ref{sub:greedy}, we prove the following tropical version of
Theorem~\ref{thm:KM}.

\begin{theorem} \label{thm:tropicalKM}
    Let $M = (E,\Ind)$ be a matroid and $V : E \to \TTmax^d$. Assume that
    \begin{equation} \label{eq:tropical+BC+condition}
        0 \in \tconv(V(B \cup C)) \quad \text{ for every basis } B \text{ and
        cocircuit } C \text{ of } M.
    \end{equation}
    Then there exists a basis $B_0$ such that $0 \in \tconv(V(B_0))$.
\end{theorem}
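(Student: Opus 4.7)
The plan is to argue via a greedy exchange procedure in the spirit of matroid intersection: starting from an arbitrary basis, we iteratively improve it using the hypothesis applied to different cocircuits until $0$ lies in the tropical hull of the basis image.

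First I would make the condition $0 \in \tconv(V(S))$ combinatorially explicit. Writing $m_e := \max_k V(e)_k$ and $A_e := \{i : V(e)_i = m_e\}$ for the argmax set of $V(e)$, a direct calculation from the definition shows that $0 \in \tconv(V(S))$ holds if and only if (i) some subset $T \subseteq S$ satisfies $\bigcup_{e \in T} A_e = \{1, \ldots, d\}$ with $m_e \ge 0$ for every $e \in T$, and (ii) there exists $e^* \in S$ with $m_{e^*} \le 0$. This recasts the tropical Carath\'eodory condition as a set-cover problem on the argmax sets together with a single numerical constraint.

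With this reformulation I would proceed by contradiction. Assume no basis achieves the conclusion and pick a basis $B$ that is extremal with respect to a potential $\Phi(B)$ measuring how far $V(B)$ is from satisfying (i) and (ii); a natural choice is the minimum over admissible supports $T \subseteq B$ of the number of uncovered coordinates, refined lexicographically by the sorted multiset $\{m_e : e \in B\}$. Extremality together with the failure of the conclusion exposes a concrete obstruction at $B$: an uncovered coordinate $i^*$ or the absence of any element with $m_e \le 0$. Invoking the hypothesis for a cocircuit $C$ chosen in view of the obstruction, $0 \in \tconv(V(B \cup C))$ forces some $e \in C \setminus B$ to supply the missing piece, and the cocircuit exchange axiom then yields $f \in B \cap C$ with $B' := B - f + e$ a basis.

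The main obstacle is showing that $C$, $e$, and $f$ can always be chosen so that $\Phi(B') < \Phi(B)$: a single swap may simultaneously add coverage through $e$ and destroy coverage through $f$, so naive local greed need not suffice. This is precisely where the matroid-intersection analogy is essential. One augments along an alternating sequence of exchanges, determined by the bipartite incidence graph with edges $\{(e, i) : i \in A_e\}$ together with the cocircuit structure of $M$, so that the cumulative change in $\Phi$ is strictly negative. Making this alternating augmentation rigorous, and ensuring it terminates at a basis $B_0$ with $\Phi(B_0) = 0$, is the delicate heart of the argument.
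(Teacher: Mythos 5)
Your combinatorial reformulation of $0 \in \tconv(V(S))$ via the argmax sets $A_e$ and the values $m_e$ is correct; it is exactly the covector-graph/tropical-Farkas description the paper uses (your conditions (i) and (ii) say that every node of $[d+1]$ is covered, with $d+1$ corresponding to the elements $e$ with $m_e \le 0$). But from that point on the proposal has a genuine gap: the step you defer as ``the delicate heart of the argument'' is precisely the content of the theorem, and the alternating-augmentation machinery you propose to fill it is not needed and is not how the difficulty is resolved. The problem with your order of operations is that you first choose a cocircuit $C$ aimed at the uncovered coordinate, find $e \in C \setminus B$ supplying the missing coverage, and only then look for an $f \in B \cap C$ to discard --- at which point you have no control over what coverage $f$ was providing, which is exactly the obstruction you identify.

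The paper reverses the order of choices, and this single reordering eliminates the difficulty. If $B$ is a basis with $\nbhdG(B) \neq [d+1]$, then since every element of $B$ has a nonempty neighborhood while $|\nbhdG(B)| \le d < |B|$, the pigeonhole principle gives a \emph{redundant} element $b \in B$, i.e.\ one with $\nbhdG(B \setminus b) = \nbhdG(B)$. Now take $C$ to be the fundamental cocircuit for $B$ and $b$, so that $C \cap B = \{b\}$. The hypothesis applied to $B \cup C$ yields some $c \in C$ adjacent to the uncovered node $\ell$; necessarily $c \neq b$, so $c \in C \setminus B$, and $B' = (B \setminus b) \cup c$ is again a basis (Lemma~\ref{lem:augment+transversal}). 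Because the discarded element was chosen redundant \emph{before} the cocircuit was chosen, $\nbhdG(B') \supseteq \nbhdG(B) \cup \{\ell\}$, so the coverage strictly increases and the process terminates after at most $d$ steps. No potential function beyond $|\nbhdG(B)|$, no lexicographic refinement, and no alternating paths are required; in particular your separate numerical bookkeeping of the multiset $\{m_e\}$ is superfluous once conditions (i) and (ii) are folded into a single covering condition on $[d+1]$.
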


Gaubert and Meunier \cite{GaubertMeunier:2010} proved a tropical Colorful
Carath\'eodory Theorem that, as with Theorem~\ref{thm:KM}, is implied by our
result.  Whereas the tropical Colorful Carath\'eodory Theorem is a consequence
of the pigeonhole principle, our proof is inspired by matroid intersection and
the geometric proof of Theorem~\ref{thm:KM} that we give in
Section~\ref{sub:cocircuits}.

Theorem~\ref{thm:Barany} was generalized by Holmsen~\cite{Holmsen:2016} to
matroids and oriented matroids. In Section~\ref{sub:holmsen}, we discuss the
relation to Theorem~\ref{thm:KM} and we prove a slightly weaker analogue for
tropical convexity. 

The collection of subsets of $E$ \emph{not} containing $0$ in the convex hull
is an abstract simplicial complex, called the \emph{support complex}. The
proofs of Kalai--Meshulam and Holmsen build on homological properties of the
support complex. In Section~\ref{sub:homological} we explain that while the
results remain true for tropical convexity, the homological methods fail
badly.

In Section~\ref{sec:LP}, we study tropical colorful linear programming and we
show that just like its classical counterpart, it is NP-complete. We also
remark that it is generally not true that tropicalizations of hard problems
are hard by considering tropical $0/1$-integer linear programming.

The paper closes with Section~\ref{sec:end} with afterthoughts and questions
on generalizations to polymatroids, anti-matroids, examples, and matroid
simplicial depth.

\textbf{Acknowledgements.} This work started during the program \emph{Tropical
Geometry, Amoebas and Polytopes} at the Institute Mittag-Leffler in spring
2018. The authors would like to thank the institute as well as the organizers
for the inspiring atmosphere.  We would also like to thank Andreas Holmsen,
Diane Maclagan, Sebastian Manecke, Fr{\'e}d{\'e}ric \mbox{Meunier}, Matthias
Schymura, and Bilal Sheikh for insightful conversations.  GL was supported by
the European Research Council (ERC) Starting Grant ScaleOpt, No.~757481. 

\section{Matroids and tropical convexity}\label{sec:matroids}

\subsection{Bases and Cocircuits}\label{sub:cocircuits}

We start by giving a simpler combinatorial perspective on
Theorem~\ref{thm:KM}.  The \Def{bases} of a matroid $M = (E,\Ind)$ is the
collection $\Bases \subset \Ind$ of inclusion-maximal independent sets. The
basis exchange property states that for any $B_1, B_2 \in \Bases$ and $e \in
B_1 \setminus B_2$ there is $f \in B_2 \setminus B_1$ such that $(B_1
\setminus e) \cup f \in \Bases$.  The \Def{circuits} of $M$ are the
inclusion-minimal dependent subsets and the \Def{dual} of $M$ is the matroid
$M^*$ with bases $\Bases^* = \{E \setminus B : B \in \Bases\}$. The
\Def{cocircuits} $\Cocircuits$ of $M$ are the circuits of $M^*$. It is easy to
see that each cocircuit of $M$ is a transversal of $\Bases$, that is, any
$C \in \Cocircuits$ has a non-empty intersection with every basis $B \in
\Bases$. In fact, for any element $e$ in a given basis $B$ there is $C \in
\Cocircuits$ with $C \cap B = \{ e \}$. We call $C$ the \Def{fundamental
cocircuit} for $B$ and $e$.

Note that Theorem~\ref{thm:KM} is trivial for $\rk(E) < d+1$. Thus, by truncation,
it suffices to assume that $\rk(E) = d+1$. The condition $\rk(E \setminus S)
\le d$ then states that $S$ meets every basis, while $\rk(S) = \rk(E)$
means that $S$ contains a basis. This gives the following reformulation of
Theorem~\ref{thm:KM}.

\begin{theorem}[Reformulation of Thm.~\ref{thm:KM}]
\label{thm:reformulated+KM}
    Let $M$ be a matroid on the ground set $E$ and $V : E \to
    \RR^d$. Assume that
    \[
        0 \in \conv(V(B \cup C)) \quad \text{ for all } B \in \Bases \text{ and
        } C \in \Cocircuits \, .
    \]
    Then there exists a basis $B_0$ such that $0 \in \conv(V(B_0))$.
\end{theorem}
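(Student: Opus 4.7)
The plan is to argue by contradiction in the spirit of B\'ar\'any's original geometric proof of the colorful Carath\'eodory theorem: choose a basis whose image is Euclidean-closest to the origin, and use a fundamental cocircuit to perform a strictly improving basis exchange, contradicting minimality.

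Suppose for contradiction that $0 \notin \conv(V(B))$ for every basis $B$. Picking $B \in \Bases$ minimizing $\min_{p \in \conv(V(B))} \|p\|$ and letting $q$ denote the resulting nearest point, we have $q \neq 0$ and $\langle V(b), q\rangle \geq \|q\|^2 > 0$ for all $b \in B$ by the supporting hyperplane at $q$. The point $q$ lies on the boundary of $\conv(V(B))$ and hence in the relative interior of a unique proper face $F$; I would then select $b^* \in B$ such that $V(b^*)$ is a vertex of $\conv(V(B))$ outside $F$, so that $F \subseteq \conv(V(B \setminus \{b^*\}))$.

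With the fundamental cocircuit $C$ of $B$ at $b^*$ satisfying $C \cap B = \{b^*\}$, the hypothesis yields $0 \in \conv(V(B \cup C))$. Pairing this convex combination with $q$ forces some $c \in C \setminus \{b^*\}$ to satisfy $\langle V(c), q\rangle < \|q\|^2$, since otherwise every term would contribute at least $\|q\|^2$ and we would arrive at the contradiction $0 \geq \|q\|^2 > 0$. By the fundamental cocircuit property, $B' := (B \setminus \{b^*\}) \cup \{c\}$ is again a basis. Since $q \in F \subseteq \conv(V(B'))$ and $V(c) \in \conv(V(B'))$, the segment $[q, V(c)]$ lies in $\conv(V(B'))$, and its initial direction from $q$ strictly decreases the Euclidean norm because $\langle V(c) - q, q\rangle < 0$. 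Hence $\conv(V(B'))$ contains a point strictly closer to $0$ than $q$, contradicting the minimal choice of $B$.

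The delicate step I expect to be the main technical point is the selection of $b^*$: it must be a vertex of $\conv(V(B))$ away from the minimal face carrying $q$, so that this face, and with it the short segment improvement, survives the swap dictated by the fundamental cocircuit. This is where the geometry of nearest-point projection interlocks with the matroid exchange axiom, and it is precisely what the basis--cocircuit condition is tailored to unlock.
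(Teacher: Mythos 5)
Your overall strategy coincides with the paper's proof: take a basis $B$ whose convex hull is (Euclidean-)closest to the origin, use the supporting hyperplane at the nearest point $q$, extract from the hypothesis an element $c$ of a fundamental cocircuit lying strictly on the origin's side of that hyperplane, and swap it in, which is a legitimate basis exchange by Lemma~\ref{lem:augment+transversal}. The production of $c$ (averaging the inner products with $q$ over the convex combination representing $0$) and the strict decrease of the distance along the segment $[q,V(c)]$ are both correct.

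The gap is exactly the step you single out as delicate: the existence of $b^*$. You claim that $q$ lies in the relative interior of a \emph{proper} face $F$ of $\conv(V(B))$ and hence that some vertex of $\conv(V(B))$ lies outside $F$. This fails whenever $\conv(V(B))$ is not full-dimensional and $q$ lies in its relative interior; then all of $V(B)$ is contained in the hyperplane $H_q=\{x:\langle x,q\rangle=\|q\|^2\}$, the minimal face carrying $q$ is $\conv(V(B))$ itself, and no vertex lies outside it. For instance, with $d=2$ and $V(B)=\{(1,1),(1,0),(1,-1)\}$ one has $q=(1,0)$ in the relative interior of the segment $\conv(V(B))$. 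The paper sidesteps the face structure entirely: since $q$ lies in $\conv(V(B))\cap H_q$, which is at most $(d-1)$-dimensional, Carath\'eodory's theorem gives $q\in\conv(V(I))$ for some $I\subseteq B$ with $|I|\le d$; after the reduction to $\rho(E)=d+1$ by truncation (carried out in the paper just before the reformulated statement) one has $|B|=d+1>|I|$, so $B\setminus I\neq\emptyset$, and any $b^*\in B\setminus I$ works, with $q\in\conv(V(I))\subseteq\conv(V(B'))$ surviving the swap. Note that this rank normalization is genuinely needed and is absent from your write-up: without $|B|\ge d+1$ even the Carath\'eodory-based selection of $b^*$ can fail (e.g.\ a rank-$2$ matroid in $\RR^2$ with $q$ in the relative interior of a segment spanned by exactly the two points of $B$). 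With the Carath\'eodory selection and the truncation step added, your argument becomes the paper's proof.
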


If $M$ is a partition matroid with $E = E_1 \sqcup \cdots \sqcup E_{d+1}$,
then the bases are given by sets $B$ with $|B \cap E_i| = 1$. The cocircuits
are exactly $\Cocircuits = \{E_1,\dots,E_{d+1}\}$. This shows that
Theorem~\ref{thm:KM} yields a strengthening of Theorem~\ref{thm:Barany}: If for
all $i$ and any choice $p_j \in C_j$ with $j\neq i$ the origin in contained in
the convex hull of $C_i \cup \{p_j : j \neq i\}$, then $0$ is contained in
the convex hull of a colorful simplex; see Figure~\ref{fig:partition-KM}.
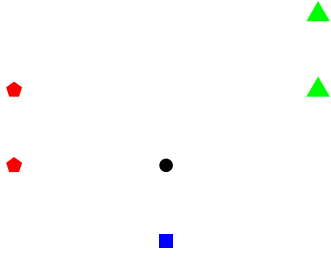
\begin{figure}[ht]
    \begin{tikzpicture}
        \tikzset{RedOuter/.style = {regular polygon,regular polygon sides=5,
                inner sep=1.8,fill=red}}
        \tikzset{BlueOuter/.style = {regular polygon,regular polygon sides=4,
                inner sep=1.8,fill=blue}}
        \tikzset{GreenOuter/.style = {regular polygon,regular polygon sides=3,
                inner sep=1.8,fill=green}}
        \tikzset{Origin/.style = {circle, fill=black, inner sep=1.8}}
  
        \coordinate (origin) at (0,0){};

        \coordinate (A1) at (-2,1);
        \coordinate (A2) at (-2,0);

        \coordinate (B1) at (2,1);
        \coordinate (B2) at (2,2);

        \coordinate (C1) at (0,-1);

        \node[Origin] at (origin){};
        \node[RedOuter] at (A1){};
        \node[RedOuter] at (A2){};
        \node[GreenOuter] at (B1){};
        \node[GreenOuter] at (B2){};
        \node[BlueOuter] at (C1){};
    \end{tikzpicture}
    \caption{The black circle is contained in the convex hull of any color
    class with the additional choice of an point from each of the other color
    classes.}\label{fig:partition-KM}
\end{figure}

We give a proof of Theorem~\ref{thm:reformulated+KM} extending the original
arguments of B\'ar\'any from~\cite{Barany:1982}; see also the last remark on
page $4$ of~\cite{Holmsen:2016}. We need the following Lemma.

\begin{lemma}[{\cite[Lem.~2]{Dawson:1980}}] \label{lem:augment+transversal}
    Let $M = (E,\Ind)$ be a matroid and $A \in \Ind$. For $e \in E \setminus
    A$, we have $A \cup e \in \Ind$ if and only if there is a cocircuit $C \in
    \Cocircuits$ with $e \in C$ and $C \cap A = \emptyset$.
\end{lemma}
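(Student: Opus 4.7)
My plan is to prove the two directions separately using two standard matroid tools: fundamental cocircuits of a basis, and the fact that a circuit and a cocircuit never meet in exactly one element.

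For the ``only if'' direction, assume $A \cup e \in \Ind$. I would extend $A \cup e$ to a basis $B$ of $M$ using the augmentation axiom. Since $e \in B$, the fundamental cocircuit $C$ of $B$ at $e$, as introduced in the paragraph preceding the lemma, satisfies $C \cap B = \{e\}$. Because $A \subseteq B \setminus e$, this immediately gives $C \cap A = \emptyset$ while $e \in C$, as required.

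For the ``if'' direction, I would argue by contradiction: suppose a cocircuit $C$ with $e \in C$ and $C \cap A = \emptyset$ exists, but $A \cup e \notin \Ind$. Then $A \cup e$ contains a circuit $D$, and since $A$ itself is independent we must have $e \in D$. Now $D \setminus e \subseteq A$ is disjoint from $C$, so $D \cap C = \{e\}$ is a singleton. This contradicts the standard circuit–cocircuit incidence property that $|D \cap C| \neq 1$ for any circuit $D$ and cocircuit $C$ of a matroid (see~\cite{Oxley:2011}). Hence $A \cup e \in \Ind$.

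The only mildly non-obvious ingredient is the circuit–cocircuit orthogonality used in the reverse direction, but this is a classical axiom of matroid theory and does not require a separate argument here. The forward direction is essentially definitional once one recalls the existence of fundamental cocircuits, which the paper has already set up.
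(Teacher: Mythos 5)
Your proof is correct. Note that the paper itself gives no proof of this lemma --- it is quoted verbatim from Dawson~\cite[Lem.~2]{Dawson:1980} --- so there is no in-paper argument to compare against; your two-direction argument (extend $A \cup e$ to a basis and take the fundamental cocircuit at $e$ for ``only if''; use circuit--cocircuit orthogonality $|D \cap C| \neq 1$ for ``if'') is the standard self-contained derivation and both steps are sound.
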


\begin{proof}[Proof of Theorem~\ref{thm:KM}]
    Let $B$ be a basis of $M$ and let $\delta$ be the distance of the convex
    hull $\conv(V(B))$ from the origin.  If $\delta$ is zero, we are done, so
    assume the contrary.  Let $z$ be the point of minimal distance from the
    origin in $\conv(V(B))$ and $H_z$ be the affine hyperplane with normal
    vector $z$ through $z$.  As $z$ is contained in the supporting hyperplane
    $H_z$ of $\conv(V(B))$, there is a subset $I$ of $B$ such that $z \in
    \conv(V(I))$.  Let $C$ be a fundamental cocircuit for an element in $B
    \setminus I$.  As the origin is contained in $\conv(V(B \cup C))$, there
    is an element $c \in C \setminus B$ such that $c$ is in the same open
    halfspace of $H_z$ as the origin.  By Lemma~\ref{lem:augment+transversal},
    $B' = B \setminus C \cup \{c\}$ is a basis. The distance of $\conv(V(B'))$
    to the origin is strictly smaller than $\delta$.  As there are only
    finitely many bases, the claim follows. 
\end{proof}

B\'ar\'any's Theorem~2.3 in~\cite{Barany:1982} slightly strengthens the
Colorful Carath\'eodory theorem~\ref{thm:Barany} in that any $p_1 \in C_1$ can
be completed to a colorful simplex containing the origin. The same proof as
above yields the generalization to the matroid setting.

\begin{corollary}\label{cor:Barany_stronger}
    Let $M = (E,\Ind)$ be a matroid and $V : E \to \RR^d$ satisfying the
    condition of Theorem~\ref{thm:KM}. If $e \in E$ is not a loop, then there
    is a basis $B_0 \in \Bases(M)$ with $e \in B_0$ and $0 \in \conv(V(B_0))$.
\end{corollary}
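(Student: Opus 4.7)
The plan is to adapt the proof of Theorem~\ref{thm:KM} so as to maintain the invariant that the current basis always contains $e$. Since $e$ is not a loop, $\{e\}$ is independent and extends to a basis, so we may let $B$ be a basis containing $e$ that minimizes $\delta(B)$, the distance of $\conv(V(B))$ to the origin, among bases containing $e$. Suppose for contradiction $\delta(B) > 0$, and let $z$, $H_z$, and $I \subseteq B$ (chosen minimal with $z \in \conv(V(I))$) play the same roles as in the proof of Theorem~\ref{thm:KM}. Then $V(I) \subset H_z$, $|I| \le d$, and $B \setminus I$ is nonempty, with $V(b) \cdot z > |z|^2$ for each $b \in B \setminus I$.

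If there is some $b \in B \setminus I$ different from $e$, the argument in that proof applies verbatim and produces a basis $(B \setminus b) \cup c$ still containing $e$ with $\delta < \delta(B)$, contradicting minimality. The remaining case is $B \setminus I = \{e\}$: $V(e)$ is the unique vertex of $V(B)$ strictly on the far side of $H_z$, while $V(v) \cdot z = |z|^2$ for each $v \in I = B \setminus \{e\}$. Here I would pick any $b \in I$ and apply the hypothesis to the fundamental cocircuit $C$ for $B$ at $b$: inner-producting the representation $0 = \sum_v \lambda_v V(v)$ with $z$, the contributions from $v \in B$ are all strictly positive, so some $c \in C \setminus B$ must satisfy $V(c) \cdot z < 0$. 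The enlarged hull $\conv(V(B \cup \{c\}))$ then has distance strictly less than $\delta(B)$ to the origin, by the tangent-to-ball argument along the segment from $z$ to $V(c)$; by Carath\'eodory, its closest point $y^{*}$ lies in $\conv(V(T))$ for some affinely independent $T \subseteq B \cup \{c\}$ with $c \in T$ and $|T| \le d$. Extending $T$ to a basis of the form $(B \setminus b') \cup \{c\}$ for some $b' \in (\operatorname{circ}(c,B) \cap B) \setminus \{e\}$ would then yield a basis containing $e$ with distance at most $|y^{*}| < \delta(B)$, contradicting minimality.

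The main obstacle is showing that such a $b'$ exists in every configuration. The natural candidate is $b' = b$, which always belongs to $\operatorname{circ}(c,B) \cap B$ (by the duality between fundamental cocircuits and fundamental circuits) and is distinct from $e$; this choice works whenever $b \notin T$. The delicate configuration is $b \in T$ together with $\operatorname{circ}(c,B) \cap B \subseteq T \cup \{e\}$, which I would address by varying the choice of $b \in I$ and applying the hypothesis to different cocircuits, exploiting the matroid duality so as to guarantee that a valid swap element distinct from $e$ can always be selected.
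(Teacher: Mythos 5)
Your overall strategy is genuinely different from the paper's: the paper proves the stronger statement that $-V(e) \in \cone V(B_0 \setminus e)$ for some basis $B_0 \ni e$, by contracting $e$ and running the separation/exchange descent with cones in place of convex hulls, whereas you keep the Euclidean distance descent of Theorem~\ref{thm:KM} and try to carry the invariant $e \in B$ through it. Your first case ($B \setminus I$ contains some $b \neq e$) is correct. But the case $B \setminus I = \{e\}$ is exactly where the proof is not closed, and you say so yourself: you need $\operatorname{circ}(c,B) \not\subseteq T \cup \{e\}$, where $T$ is the Carath\'eodory support of the closest point of $\conv(V(B\cup c))$, and the ``delicate configuration'' $b \in T$ with $\operatorname{circ}(c,B) \subseteq T \cup \{e\}$ is deferred to ``varying the choice of $b$'' and ``exploiting matroid duality.'' That is a plan, not an argument, and the configuration is not vacuous: already $\operatorname{circ}(c,B) = \{c,b\}$ (a parallel pair) with $b \in T$ realizes it. Nothing in the hypotheses couples the geometric data (which $c \in C_b$ satisfies $\langle V(c), z\rangle < |z|^2$, and which $T$ supports the new nearest point) with the combinatorial data (which fundamental circuits lie inside $B \cup \{c\}$), so there is no visible reason why every admissible choice of $b$ and $c$ cannot land in the bad configuration. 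This is a genuine gap.

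The structural reason your invariant is hard to maintain is a dimension count: in Theorem~\ref{thm:KM} the support $I$ of the nearest point has at most $d$ elements, so $B \setminus I \neq \emptyset$ and the exchange always has a disposable element; once you forbid disposing of $e$, the case $I = B \setminus \{e\}$ leaves you with none. The paper's contraction removes the problem at the root: after contracting $e$ and replacing the origin by $-V(e)$, every basis of $M/e$ has $d$ elements while the support of the relevant nearest point (now on a supporting linear hyperplane) has at most $d-1$, so the exchange of Theorem~\ref{thm:reformulated+KM} runs verbatim and $e$ is never at risk because it is no longer in the ground set. I would rework your argument along those lines rather than patch the delicate case. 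Two smaller inaccuracies, neither fatal: from the inner-product computation you may only conclude $\langle V(c), z\rangle < |z|^2$ (not $V(c)\cdot z<0$) when the convex combination happens to put zero weight on $B$; and $V(b)\cdot z > |z|^2$ for $b \in B\setminus I$ need not be strict. The descent only needs $\langle V(c), z\rangle < |z|^2$, so both are cosmetic.
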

\begin{proof}
    Instead of convex hulls consider the finitely generated convex cones
    $\cone V(A)$ for $A \subseteq E$. The prerequisites of
    Theorem~\ref{thm:reformulated+KM} then dictate that $\cone V(B \cup C) =
    \RR^d$. It suffices to show that $-V(e)$ is contained in some basis of the
    contracted matroid $M/e$ with bases $\Bases' = \{ B \setminus e : B \in
    \Bases, e \in B \}$. If this is not the case, then consider a linear
    hyperplane supporting $\bigcup_{B \in \Bases'} \cone(V(B))$ and separating
    it from $-V(e)$. Now a similiar argument to that in the proof of
    Theorem~\ref{thm:reformulated+KM} yield the claim.
\end{proof}

In terms of general set-systems, the collection of bases of a matroid is a
\Def{clutter} (or \Def{Sperner family}):  $B \not\subset B'$ for all $B,B' \in
\Bases$. The cocircuits form a \Def{blocker} for $\Bases$: $C \cap B \neq
\emptyset$ for all $C \in \Cocircuits$ and $B \in \Bases$ and the elements $C
\in \Cocircuits$ are inclusion-minimal with this property. 
Minimality shows that $\Cocircuits$ is a clutter with blocker $\Bases$.
This property of a dual pair of clutters $\mathcal{C}$ and $\mathcal{D}$,
see~\cite[Corollary]{EdmondsFulkerson:1970}, ensures the generalization of the
concept of a fundamental cocircuit: for every $H \in \mathcal{C}$ and $h \in
H$ there is $K \in \mathcal{D}$ such that $H \cap K = \{h\}$.
The following
proposition states that the proof of Theorem~\ref{thm:KM} is \emph{optimal} in
the sense that it does not apply to more general clutters.

\begin{proposition} \label{prop:blocker+characterization+matroid}
    A clutter $\mathcal{C}$ is the set of bases of a matroid if and only if
    $(H \setminus K) \cup k$ is in $\mathcal{C}$ for every $k \in K$.
\end{proposition}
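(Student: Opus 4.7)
The plan is to connect the hypothesis to the classical basis exchange axiom via the fundamental cocircuit property of the blocker pair $(\mathcal{C},\mathcal{D})$ recalled just before the proposition. I read the condition as: for every $H \in \mathcal{C}$, every $K \in \mathcal{D}$ with $H \cap K = \{h\}$ a singleton, and every $k \in K$, one has $(H \setminus K) \cup k \in \mathcal{C}$ (for non-singleton $H \cap K$ the set on the left would have cardinality below $|H|$, so this is the only sensible reading).

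For the forward direction, suppose $\mathcal{C}=\Bases$ and $\mathcal{D}=\Cocircuits$ for a matroid $M$. Fix $H$, $K$, $h$, and $k \in K \setminus h$ (the case $k=h$ is trivial), so $k \notin H$. The key move is to examine the fundamental circuit $D = D(k,H) \subseteq H \cup k$. Orthogonality of circuits and cocircuits forces $|K \cap D| \neq 1$; since $k \in K \cap D$, while $K \cap H = \{h\}$ and $D \setminus k \subseteq H$, the only candidate for a second element of $K \cap D$ is $h$, whence $h \in D$. Because $D$ is the unique circuit of $H \cup k$, removing $h$ leaves $(H \cup k) \setminus h = (H \setminus K) \cup k$ independent of size $|H|$, i.e., a basis.

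For the backward direction, I would verify the basis exchange axiom for $\mathcal{C}$. Given $H_1, H_2 \in \mathcal{C}$ and $e \in H_1 \setminus H_2$, the fundamental cocircuit property of the blocker pair supplies $K \in \mathcal{D}$ with $H_1 \cap K = \{e\}$; since $\mathcal{D}$ blocks $\mathcal{C}$, some $f \in K \cap H_2$ exists, and $H_1 \cap K = \{e\}$ together with $e \notin H_2$ force $f \in H_2 \setminus H_1$. The hypothesis then yields $(H_1 \setminus e) \cup f = (H_1 \setminus K) \cup f \in \mathcal{C}$. Equicardinality follows by iteration: each such exchange strictly decreases $|H_1 \setminus H_2|$ while preserving $|H_1|$, so after finitely many steps I reach some $H_1' \in \mathcal{C}$ with $H_1' \subseteq H_2$; the Sperner property forces $H_1' = H_2$, hence $|H_1| = |H_2|$. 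With equicardinality and basis exchange, $\mathcal{C}$ is the set of bases of a matroid.

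The principal obstacle is the forward direction: one must not try to exchange $h$ and $k$ directly, but instead introduce the fundamental circuit of the \emph{incoming} element $k$ and then extract $h$ from it via orthogonality. The backward direction amounts to bookkeeping once the Edmonds--Fulkerson fundamental cocircuit property for blocker pairs is in hand.
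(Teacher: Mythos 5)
Your proof is correct and, for the substantive direction (the exchange condition implies that $\mathcal{C}$ is the set of bases of a matroid), coincides with the paper's argument: both invoke the Edmonds--Fulkerson fundamental-cocircuit property of the blocker pair to get $K$ with $H_1 \cap K = \{e\}$, pick $f \in K \cap H_2$, and apply the hypothesis to verify basis exchange; your additional iteration establishing equicardinality is a detail the paper leaves implicit. For the easy direction you argue via the fundamental circuit of the incoming element $k$ and circuit--cocircuit orthogonality, whereas the paper simply applies Lemma~\ref{lem:augment+transversal} with $A = H \setminus h$ and $e = k$; both are valid one-step appeals to standard matroid theory, and your explicit reading of the statement as quantified over $H \in \mathcal{C}$ and $K$ in the blocker with $|H \cap K| = 1$ is the intended (and only viable) interpretation.
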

\begin{proof}
  Sufficiency follows from the definition of
    cocircuits and Lemma~\ref{lem:augment+transversal}. 
    For necessity one easily verifies the basis exchange axiom: Let $H_1, H_2
    \in \mathcal{C}$. Fix an element $h \in H_1 \setminus H_2$. Let $K \in
    \mathcal{D}$ with $H_1 \cap K = \{h\}$. Since $K$ intersects every element
    of $\mathcal{C}$, there is an element $k \in K \cap H_2$. By assumption,
    $H_1 \setminus \{h\} \cup \{k\}$ is again an element of $\mathcal{C}$.
    This shows the claim. 
\end{proof}

\begin{remark}
    \newcommand\Puiseux{\RR\{\!\!\{t\}\!\!\}}
    Note that the proofs in this section apply to general ordered fields, such
    as the field of Puiseux series $\Puiseux$.  In particular, for tropical
    configurations $V : E \to \TTmax^d$ that arise as tropicalizations of some
    $\hat V : E \to \Puiseux^d$, Theorem~\ref{thm:KM} implies
    Theorem~\ref{thm:tropicalKM} as well as the results in the next section.
    However, the containment structure of a tropical convex hull is not
    necessarily preserved under lifting to $\Puiseux^d$.  This can be seen
    from the proof of~\cite[Proposition~2.1]{DevelinYu:2007} that each
    tropical polytope occurs as the image of a polytope over Puiseux series.
    The lift of a point, which is contained in the tropical convex hull of a
    finite set $V(E)$ of points, is not necessarily contained in the convex
    hull of the lifted points. 
\end{remark}

\subsection{Tropical Greedy Bases} \label{sub:greedy}

By virtue of tropical convexity, the proof of Theorem~\ref{thm:tropicalKM}.
reduces to a simpler claim about bipartite graphs.  To a point $p \in
\TTmax^d$, we associate a decomposition of $\TTmax^d$ into $d+1$ affine
sectors. For $i \in [d+1] := \{1,\dots,d+1\}$, we define the \Def{$i$-th
affine sector} as
\begin{equation} \label{eq:aff+point+sector}
    S_i(p) \ := \ \SetOf{z \in \TTmax^d}{z_i + p_k \geq z_k + p_i \text{ for }
    1 \le k \le d+1} ,
\end{equation}
where we set $p_{d+1} := z_{d+1} := 0$.  For a point $v \in \TTmax^d$ we set
\[
    \neighbour_p(v) \ := \ \SetOf{i \in [d+1]}{v \in S_i(p)}.
\]
The \Def{covector graph} $G_V$ of $p$ with respect to $V$ is the bipartite
graph on $V \sqcup [d+1]$ with edges $(v,i)$ for $i \in \neighbour_p(v)$.  If
the covector graph $G$ is fixed, we will use $\neighbour_G(v) \subseteq [d+1]$
for neighborhood of $v \in V$.

The next Lemma characterizes the containment of a point with finite coordinates; we omit the version for points with $-\infty$ entries as we do not need it in the following.  

\begin{lemma}[{\cite[Prop.~9]{DevelinSturmfels:2004},
    \cite[Lemma~28]{JoswigLoho:2016}}] \label{lem:tropical+Farkas}
    The point $p \in \RR^d$ is contained in the tropical convex hull of $v^1,
    \ldots, v^n$ if and only if no node of $[d+1]$ is isolated in $G_V$.
\end{lemma}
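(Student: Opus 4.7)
The plan is to recognize membership in $\tconv(V)$ as a tropical linear feasibility problem and to interpret non-isolation of nodes in $[d+1]$ as its combinatorial tightness certificate.

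I would first homogenize by appending a $(d+1)$-st coordinate equal to $0$ to each $v^j$ and to $p$. Under this convention the normalization $\bigoplus_j \lambda_j = 0$ becomes the equation at the new coordinate, so $p \in \tconv(V)$ is equivalent to the existence of $\lambda \in \TTmax^n$ with $p_k = \max_j(\lambda_j + v^j_k)$ for every $k \in [d+1]$. Unwinding \eqref{eq:aff+point+sector} shows that $v^j \in S_i(p)$ says exactly $v^j_i - p_i \geq v^j_k - p_k$ for all $k$, so $\neighbour_p(v^j)$ is precisely the set of indices where $v^j_k - p_k$ attains its maximum. Setting $\lambda_j^\star := -\max_k(v^j_k - p_k)$, one obtains $\lambda_j^\star + v^j_k \leq p_k$ with equality precisely when $k \in \neighbour_p(v^j)$; hence $\lambda^\star$ is the componentwise largest candidate witness, and a coordinate $k$ is non-isolated in $G_V$ iff its inequality is tight for some $j$.

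For necessity I would take any witness $\lambda$, pick for a given $k$ an index $j$ attaining the maximum $p_k = \lambda_j + v^j_k$, and use the feasibility bounds $\lambda_j + v^j_{k'} \leq p_{k'}$ for $k' \neq k$ to deduce $v^j_k - p_k \geq v^j_{k'} - p_{k'}$, i.e.\ $k \in \neighbour_p(v^j)$, so $k$ is not isolated. For sufficiency I would take $\lambda := \lambda^\star$: non-isolation delivers, at every $k$, an index $j$ giving equality, while the definition of $\lambda^\star$ prevents any term from overshooting, so $p_k = \max_j(\lambda_j^\star + v^j_k)$ for all $k \in [d+1]$ (the normalization $\max_j \lambda_j^\star = 0$ being recovered from the tightness at the $(d+1)$-st coordinate). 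The only mildly delicate point is that some $v^j$ may have $-\infty$ coordinates; but since $p$ is finite these entries can never be tight and simply drop out of the argument, which is presumably why the cited lemma is stated for $p \in \RR^d$.
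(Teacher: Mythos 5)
The paper itself does not prove this lemma---it is imported verbatim from the cited sources---but your argument is correct and is essentially the standard proof given there: homogenize with a $(d+1)$-st coordinate so that the normalization $\bigoplus_j\lambda_j=0$ becomes one more equation, observe that $\lambda_j^\star=-\max_k(v^j_k-p_k)$ is the componentwise largest feasible multiplier with equality $\lambda_j^\star+v^j_k=p_k$ exactly when $k\in\neighbour_p(v^j)$, and read off both directions from tightness. Your treatment of the normalization via tightness at coordinate $d+1$ and of $-\infty$ entries (which can never be tight since $p$ is finite) is also correct, so there is nothing to add.
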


Note that, by construction, no node of $V$ is isolated in $G_V$.

\begin{proof}[Proof of Theorem~\ref{thm:tropicalKM}]
    Let $G = G_V$ be the covector graph for $p = 0$ and $V = V(E)$.  In
    analogy to our proof of Theorem~\ref{thm:reformulated+KM}, we start with
    an arbitrary basis $B$ of $M$. If $0$ is not contained in the tropical
    convex hull of $V(B)$ there is an element $\ell \in [d+1]$ not covered by
    the neighborhood $\nbhdG(B) = \bigcup_{e \in B} \nbhdG(e)$ in $G$.  Using
    the tropical colorful Carath\'eodory theorem (i.e., the pigeonhole principle
    applied to the covector graph $G$), there is an element $b \in B$ with
    $\nbhdG(B) = \nbhdG(B \setminus b)$.
    By~\eqref{eq:tropical+BC+condition}, the neighborhood $\nbhdG(B \cup C)$
    for each basis $B$ and cocircuit $C$ of $M$ is $[d+1]$.  In particular,
    this holds for the fundamental cocircuit $C$ of $B$ and $b$.  Hence, let
    $c \in C$ be an element adjacent to $\ell$.  Setting $B' = B \setminus C
    \cup \{c\}$, we have $|\nbhdG(B')| \geq |\nbhdG(B)| + 1$.  Iterating this
    construction at most $d$ times yields a desired basis. 
\end{proof}

The proof gives rise to an algorithm which uses an oracle for checking
independence or providing a fundamental cocircuit. 

Note that in the course of the proof, we can ensure that a fixed element $e
\in B$ does not leave the basis. This gives the tropical version of
Corollary~\ref{cor:Barany_stronger}.
\begin{corollary}
    Let $M$ and $V : E \to \TTmax^d$ satisfy the conditions of
    Theorem~\ref{thm:tropicalKM}. If $e \in E$ is not a loop, then there is a
    basis $B_0 \in \Bases$ such that $0 \in \tconv(V(B_0))$.
\end{corollary}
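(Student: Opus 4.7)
The plan is to mimic the proof of Theorem~\ref{thm:tropicalKM}, initialised with a basis containing $e$ and with the pigeonhole step modified so that $e$ never leaves the basis. Since $e$ is not a loop, some basis $B$ contains $e$, and I would take this as the starting basis. Each iteration of that algorithm replaces a single element $b \in B$ by some $c$ from the fundamental cocircuit $C$ of $B$ and $b$; because $C \cap B = \{b\}$, only $b$ is removed from $B$. It therefore suffices to show that the pigeonhole step, which selects $b \in B$ with $\nbhdG(B) = \nbhdG(B \setminus b)$, can always be carried out with $b \neq e$.

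For this step I would run the following counting argument. Suppose toward a contradiction that every $b \in B \setminus \{e\}$ is non-redundant, so each such $b$ has some $\ell_b \in \nbhdG(b)$ not covered by any other element of $B$. Then $\ell_b \notin \nbhdG(e)$ (as $e \in B \setminus \{b\}$) and $\ell_b \neq \ell$, where $\ell \in [d+1] \setminus \nbhdG(B)$ is the uncovered sector. The map $b \mapsto \ell_b$ is injective, giving
\[
    |B| - 1 \ \le \ |[d+1] \setminus (\{\ell\} \cup \nbhdG(e))| \ = \ d - |\nbhdG(e)| \ \le \ d-1,
\]
since $\nbhdG(e) \neq \emptyset$ ($e$ is never isolated in the covector graph). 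This forces $|B| \le d$, contradicting the size $|B| \ge d+1$ that is implicit whenever the pigeonhole step of Theorem~\ref{thm:tropicalKM} produces a redundant element at all.

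Once a redundant $b \in B \setminus \{e\}$ is in hand, I would conclude exactly as in Theorem~\ref{thm:tropicalKM}: the hypothesis~\eqref{eq:tropical+BC+condition} applied to $B$ together with the fundamental cocircuit $C$ of $b$ gives some $c \in C \setminus B$ adjacent to $\ell$, and by Lemma~\ref{lem:augment+transversal} the set $B' = (B \setminus \{b\}) \cup \{c\}$ is again a basis; it contains $e$ and satisfies $|\nbhdG(B')| \ge |\nbhdG(B)| + 1$. After at most $d$ iterations the algorithm produces the desired basis $B_0 \ni e$ with $0 \in \tconv(V(B_0))$.

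I expect the main obstacle to be the counting estimate ensuring $b \neq e$ is always a valid choice; the remaining steps are a straightforward adaptation of the proof of Theorem~\ref{thm:tropicalKM}.
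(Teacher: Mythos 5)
Your proposal is correct and follows essentially the same route as the paper: the paper's entire justification for this corollary is the one-line remark that, in the course of the proof of Theorem~\ref{thm:tropicalKM}, one can ensure a fixed non-loop $e$ never leaves the basis. Your counting argument supplies exactly the detail the paper leaves implicit --- that the pigeonhole step always admits a redundant element $b \neq e$ because $\nbhdG(e) \neq \emptyset$ --- under the same implicit assumption $|B| = \rho(E) \geq d+1$ on which the theorem's own proof already relies.
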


\subsection{Unions of color classes} \label{sub:holmsen}

Another strengthening of the Colorful Carath\'eodory Theorem is obtained by
considering unions of color classes. 

\begin{theorem}[{\cite[Theorem~1]{ArochaBaranyBrachoFabilaMontejano:2009},\cite[Theorem~5]{HolmsenPachTverberg:2008}}]  \label{thm:strong+Barany}
    Let $C_1,\dots,C_{d+1} \subset \RR^d$ be finite sets of points such that
    $0$ is contained in the convex hull $\conv(C_i \cup C_j)$ for all
    $i \neq j$.  Then there are $p_i \in C_i$ for $i=1,\dots,d+1$ such
    that $0 \in \conv(p_1,\dots,p_{d+1})$.
\end{theorem}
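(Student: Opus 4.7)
The plan is to adapt the distance-minimization strategy that we used for Theorem~\ref{thm:reformulated+KM}. Viewing $E = E_1 \sqcup \cdots \sqcup E_{d+1}$ with $V(E_i) = C_i$ as a partition matroid, colorful simplices are exactly the bases and the sets $E_i$ are exactly the cocircuits. However, the hypothesis of Theorem~\ref{thm:reformulated+KM} requires $0 \in \conv(V(B \cup E_i))$ for \emph{every} colorful $B$ and every $i$, which is strictly stronger than the pairwise condition $0 \in \conv(C_i \cup C_j)$. Hence Theorem~\ref{thm:strong+Barany} is not a formal corollary of Theorem~\ref{thm:reformulated+KM}, and the exchange step of that proof must be re-examined.

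Suppose for contradiction that no colorful simplex contains $0$, and choose $\sigma = \{p_1, \dots, p_{d+1}\}$ with $p_i \in C_i$ minimizing $\delta := d(0, \conv(\sigma)) > 0$. Let $z$ be the closest point, $H = \{x : \inner{z, x} = \|z\|^2\}$ the supporting hyperplane with normal $z$, and $J = \{i : p_i \in H\}$ the indices of the face whose relative interior contains $z$. Pick any $i^* \in [d+1] \setminus J$ (a separate easy case handles $J = [d+1]$) and any $j^* \in J$. Applying the hypothesis $0 \in \conv(C_{i^*} \cup C_{j^*})$, some point $q \in C_{i^*} \cup C_{j^*}$ lies strictly on the $0$-side of $H$.

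If $q \in C_{i^*}$, swapping $p_{i^*}$ for $q$ produces a colorful simplex $\sigma'$ whose convex hull still contains $z$ (because $i^* \notin J$) together with the new vertex $q$ in the open half-space $H^-$ containing $0$. A short segment from $z$ toward $q$ lies in $\conv(\sigma')$ and has norm strictly less than $\delta$, contradicting the minimality of $\sigma$. This is exactly the exchange used for Theorem~\ref{thm:reformulated+KM}.

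The main obstacle is the case $q \in C_{j^*}$ with $j^* \in J$: the exchange $p_{j^*} \leftrightarrow q$ destroys the face carrying $z$, and the immediate candidate $z' = z + \lambda_{j^*}(q - p_{j^*})$, while lying in the new convex hull on the $0$-side of $H$, may have $\|z'\| > \|z\|$. To get around this I would try to exploit the freedom in choosing $j^* \in J$ together with the fact that the hypothesis supplies witnesses for \emph{every} pair of colors: either some witness ends up in $C_{i^*}$ (returning to the easy case) or, by running the hypothesis over pairs inside $J$, we obtain witnesses $q_j \in C_j \cap H^-$ for essentially all $j \in J$ and attempt a simultaneous replacement of the vertices indexed by $J$, controlling the global displacement by a finer perturbation estimate. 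Unlike the single exchange used for Theorem~\ref{thm:reformulated+KM}, this simultaneous exchange does not evidently decrease the distance to the origin, which is exactly where the proofs in the cited references bring in genuinely topological or oriented-matroid input; this is what we anticipate to be the real difficulty.
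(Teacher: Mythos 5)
There is a genuine gap, and you identify it yourself: the proposal is an honest setup for a distance-decreasing exchange argument, but the decisive case is left open, so this is not a proof. Concretely, after choosing the colorful simplex $\sigma$ minimizing $\delta$, the nearest point $z$, the supporting hyperplane $H$, and the face indices $J$, the hypothesis $0 \in \conv(C_{i^*} \cup C_{j^*})$ only guarantees a witness $q$ with $\inner{z,q} \le 0$ somewhere in the \emph{union}; nothing prevents every such witness, for every admissible pair, from lying in a class $C_j$ with $j \in J$. In that case replacing $p_j$ by $q$ removes a vertex of the face whose relative interior contains $z$, the new simplex need not contain $z$, and its distance to the origin can increase, so the monovariant driving the induction is lost. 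The suggested ``simultaneous replacement of the vertices indexed by $J$'' is not accompanied by any estimate showing the distance decreases, and there is no reason it should; this is precisely the content of the theorem. Note also that the paper's Figure~\ref{fig:partition-strong-Barany} exhibits a configuration satisfying the pairwise hypothesis of Theorem~\ref{thm:strong+Barany} but not the hypothesis of Theorem~\ref{thm:reformulated+KM}, which confirms your observation that no reduction to the basis-plus-cocircuit theorem is available and that the single-exchange proof cannot simply be rerun.

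For context: the paper does not prove this statement at all; it quotes it from the literature, and the cited proofs (Arocha--B\'ar\'any--Bracho--Fabila--Montejano; Holmsen--Pach--Tverberg) close exactly the case you get stuck on by topological means (degree/Borsuk--Ulam-type arguments), not by a Carath\'eodory-style exchange. Your diagnosis of where the difficulty lies is accurate, but as submitted the argument establishes only the easy case $q \in C_{i^*}$ with $i^* \notin J$ and therefore does not prove the theorem.
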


A generalization of this result, which also replaces convex hulls in Euclidean
space by oriented matroids is due to Holmsen~\cite{Holmsen:2016}.

\begin{theorem}[{\cite[Theorem 1.2]{Holmsen:2016}}]\label{thm:Holmsen}
    Let $M$ be a matroid with rank function $\rho$ and let $\mathcal{O}$ be an
    oriented matroid of rank $d$, both defined on the same ground set $V$ and
    satisfying $\rho(V) = d+1$. If every subset $S \subset V$ such that
    $\rho(V\setminus S) \leq d-1$ contains a positive circuit of
    $\mathcal{O}$, then there exists a positive circuit of $\mathcal{O}$
    contained in an independent set of $M$. 
\end{theorem}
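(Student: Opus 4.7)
The plan is to adapt the cocircuit-exchange strategy from the proof of Theorem~\ref{thm:reformulated+KM}, replacing the geometric separation by a supporting hyperplane with a combinatorial analogue internal to the oriented matroid $\mathcal{O}$. The natural measure of progress is the $M$-rank $\rho(\supp(x))$ of the support of a positive circuit $x$ of $\mathcal{O}$: a positive circuit with $M$-independent support is precisely one for which $|\supp(x)|=\rho(\supp(x))$, so it suffices to produce a positive circuit whose support attains its own cardinality in $M$-rank.

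First I would verify a base case. Taking $S=V$ in the hypothesis is valid since $\rho(V\setminus V)=0\le d-1$ provided $d\ge 1$, and this produces at least one positive circuit $x_0$ of $\mathcal{O}$. Arguing by contradiction, assume no positive circuit has $M$-independent support and pick a positive circuit $x$ that maximizes $\rho(\supp(x))$; call this value $r$. Since circuits of an oriented matroid of rank $d$ have size at most $d+1$, and $|\supp(x)|>r$ by assumption, we have $r\le d$. Set $T:=\operatorname{cl}_M(\supp(x))$, a flat of $M$-rank $r$.

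In the sub-case $r\le d-1$ the hypothesis applies to $S=V\setminus T$ and produces a positive circuit $x'$ whose support is disjoint from $\supp(x)$. The sum $x+x'$ is a non-negative vector of $\mathcal{O}$; since every non-negative vector of an oriented matroid is a conformal non-negative combination of positive circuits, some positive circuit $y$ in such a decomposition must meet $\supp(x')$. A careful bookkeeping using that every element of $\supp(x')$ lies outside the $M$-closure of $\supp(x)$ shows that a suitable $y$ can be chosen so that $\rho(\supp(y))>r$, contradicting the maximality of $r$.

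The main obstacle is the boundary case $r=d$ and $|\supp(x)|=d+1$, where the $M$-closure of $\supp(x)$ already has rank one short of $\rho(V)=d+1$ and the hypothesis no longer applies to $V\setminus T$. Here one must combine a fundamental $M$-cocircuit exchange on an element of $\supp(x)$ (as in the proof of Theorem~\ref{thm:reformulated+KM}) with the oriented-matroid circuit elimination axiom, so as to build a positive circuit of strictly larger $M$-rank. Ensuring that the signed-vector output of circuit elimination can actually be taken to be a \emph{positive} circuit, rather than an arbitrary signed vector, is the delicate step; this is where Holmsen's original argument invokes topological connectivity of a nerve of supports, and is arguably the precise reason why—as the authors later note in Section~\ref{sub:homological}—the topological approach fails when convex hulls are replaced by tropical convex hulls.
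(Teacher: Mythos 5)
This statement is Holmsen's theorem, which the paper quotes from \cite[Theorem 1.2]{Holmsen:2016} without proof; the authors explicitly note that Holmsen's argument (like Kalai--Meshulam's) is homological, and in this paper they only establish a \emph{weaker} tropical analogue (Theorem~\ref{thm:tropical+weak+Holmsen}). So you are attempting something the paper deliberately does not do, namely an elementary exchange-type proof, and your sketch does not close.

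The concrete gap is already in your ``easy'' sub-case $r\le d-1$. You take a positive circuit $x$ maximizing $r=\rho(\supp(x))$, obtain a positive circuit $x'$ with support disjoint from $T=\operatorname{cl}_M(\supp(x))$, and then decompose the composition of $x$ and $x'$ conformally into positive circuits, claiming a suitable circuit $y$ in the decomposition satisfies $\rho(\supp(y))>r$. But the conformal decomposition of the composition of two disjoint positive circuits can simply be $\{x,x'\}$ itself, and by your maximality assumption both $\rho(\supp(x))\le r$ and $\rho(\supp(x'))\le r$, so no contradiction is produced. Even if some positive circuit $y$ with support meeting both $\supp(x)$ and $\supp(x')$ happened to exist inside $\supp(x)\cup\supp(x')$, the fact that $\supp(x')$ avoids $\operatorname{cl}_M(\supp(x))$ only tells you $\rho(\supp(y))\ge \rho(\supp(y)\cap\supp(x))+1$, and the first term can be far below $r$; nothing forces $\rho(\supp(y))>r$. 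The ``careful bookkeeping'' is therefore not a proof step but the missing idea. The boundary case $r=d$ you concede outright: you correctly identify that circuit elimination in an oriented matroid does not return a \emph{positive} circuit, and that this is exactly where Holmsen invokes topological connectivity of the support complex --- but identifying the obstruction is not overcoming it. As written, neither case of your induction is established, so the proposal is a plausible plan rather than a proof; to actually prove the statement you would either need to reproduce Holmsen's nerve/homology argument or find a genuinely new combinatorial mechanism for producing positive circuits, which would be a result of independent interest.
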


Note that Theorem~\ref{thm:Holmsen} is not a strengthening of
Theorem~\ref{thm:KM}! Figure~\ref{fig:partition-strong-Barany} illustrates
this.
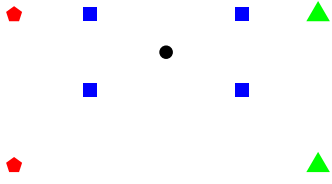
\begin{figure}
    \begin{tikzpicture}
        \tikzset{RedOuter/.style = {regular polygon,regular polygon sides=5,
                inner sep=1.8,fill=red}}
        \tikzset{BlueOuter/.style = {regular polygon,regular polygon sides=4,
                inner sep=1.8,fill=blue}}
        \tikzset{GreenOuter/.style = {regular polygon,regular polygon sides=3,
                inner sep=1.8,fill=green}}
        \tikzset{Origin/.style = {circle, fill=black, inner sep=1.8}}

        \coordinate (origin) at (0,0.5){};

        \coordinate (A1) at (-2,1);
        \coordinate (A2) at (-2,-1);

        \coordinate (B1) at (2,1);
        \coordinate (B2) at (2,-1);

        \coordinate (C1) at (1,1);
        \coordinate (C2) at (1,0);
        \coordinate (C3) at (-1,1);
        \coordinate (C4) at (-1,0);

        \node[Origin] at (origin){};
        \node[RedOuter] at (A1){};
        \node[RedOuter] at (A2){};
        \node[GreenOuter] at (B1){};
        \node[GreenOuter] at (B2){};
        \node[BlueOuter] at (C1){};
        \node[BlueOuter] at (C2){};
        \node[BlueOuter] at (C3){};
        \node[BlueOuter] at (C4){};
    \end{tikzpicture}
    \caption{A configuration fulfilling the prerequisites of
    Theorem~\ref{thm:strong+Barany} but not of
    Theorem~\ref{thm:reformulated+KM}.} \label{fig:partition-strong-Barany}
\end{figure}

Observe that the union of two cocircuits in a partition matroid of rank $d+1$
is a minimal set whose complement has rank $d-1$.  This yields the matroid
generalization of Theorem~\ref{thm:strong+Barany}. 

\begin{corollary} \label{cor:corank2sets}
    Let $M$ be a matroid on the ground set $E$ and $V : E \to \RR^d$.  If the
    origin is contained in the convex hull $\conv(V(S))$ for each set $S$ that
    meets every basis in at least $2$ elements, then there is a basis $B_0$
    such that $0 \in \conv(V(B_0))$.
\end{corollary}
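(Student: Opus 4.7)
The plan is to imitate the geometric distance-descent argument from the proof of Theorem~\ref{thm:reformulated+KM}, using pairs of cocircuits in place of a single fundamental cocircuit. As a preparatory step I would show that the hypothesis implies
\[
0 \;\in\; \conv(V(C_1 \cup C_2))
\]
for every pair of distinct cocircuits $C_1 \neq C_2$ of $M$. Indeed, distinct cocircuits are incomparable, so by minimality $C_1 \cap C_2$ contains no cocircuit; hence $\rho(E \setminus (C_1 \cap C_2)) = \rho(E) = d+1$, and submodularity of $\rho$ yields
\[
\rho(E \setminus (C_1 \cup C_2)) \;\le\; \rho(E \setminus C_1) + \rho(E \setminus C_2) - \rho(E \setminus (C_1 \cap C_2)) \;=\; d-1,
\]
so the hypothesis applies to $S = C_1 \cup C_2$.

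With this in hand, pick a basis $B$ with $\delta := \operatorname{dist}(0, \conv(V(B))) > 0$, let $z$ be the closest point in $\conv(V(B))$ to the origin, let $H_z$ be the supporting hyperplane at $z$, and take $I \subseteq B$ minimal with $z \in \conv(V(I))$; Carath\'eodory inside $H_z$ gives $|I| \le d$. When $|I| \le d-1$ there are two distinct $b_1, b_2 \in B \setminus I$, and their fundamental cocircuits $C_1, C_2$ are distinct since $C_i \cap B = \{b_i\}$. The preparatory observation then furnishes $c \in C_1 \cup C_2$ with $V(c)$ strictly on the origin side of $H_z$; Lemma~\ref{lem:augment+transversal} converts this into a basis $B' = (B \setminus \{b_i\}) \cup \{c\}$, and since $I \subseteq B'$ the point $z$ remains in $\conv(V(B'))$ alongside $V(c) \in H_z^-$, so the distance strictly decreases as in the proof of Theorem~\ref{thm:reformulated+KM}. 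Finiteness of $\Bases$ then terminates the descent.

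The main obstacle is the degenerate case $|I| = d$, where only one $b \in B$ lies above $H_z$ and an exchange of some $b_2 \in I$ a priori loses $z$ from the new convex hull. My plan here is to apply the hypothesis to complements of the rank-$(d-1)$ sub-flats $F = \operatorname{cl}(I \setminus \{b_2\})$ of $\operatorname{cl}(I)$; each produces a candidate $c \in E \setminus F$ with $V(c) \in H_z^-$. If some such $c$ lies outside $\operatorname{cl}(I)$, then $(B \setminus \{b\}) \cup \{c\}$ is a basis retaining $V(I)$ in its hull, and the single-swap argument completes the descent. Otherwise every element $e$ with $V(e) \in H_z^-$ lies in $\operatorname{cl}(I)$; applying the hypothesis to every rank-$(d-1)$ sub-flat of $\operatorname{cl}(I)$ shows that these good elements cannot all sit in any proper sub-flat of $\operatorname{cl}(I)$, so they contain an $M$-independent set $G$ of size $d$ spanning $\operatorname{cl}(I)$, making $G \cup \{b\}$ into a new $M$-basis. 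The technical crux, which I expect to be the main difficulty, is verifying that $\conv(V(G \cup \{b\}))$ lies strictly closer to $0$ than $\delta$; this should follow from the coexistence of $V(G) \subset H_z^-$ and $V(b) \in H_z^+$ together with the fact that $G$ matroidally replaces the facet $V(I)$ supporting the old closest point $z$.
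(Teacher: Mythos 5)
Your preparatory step and the case $|I|\le d-1$ are sound: submodularity does give $\rho(E\setminus(C_1\cup C_2))\le d-1$ for distinct cocircuits, so the hypothesis applies to $C_1\cup C_2$, and the single swap then works exactly as in the paper's proof of Theorem~\ref{thm:reformulated+KM} because the new basis still contains $I$, hence retains $z$ together with a point of $V$ strictly on the origin side of $H_z$. The same goes for the subcase of $|I|=d$ in which some good element lies outside $\operatorname{cl}(I)$, and your counting argument that the good elements have rank $d$ in the remaining subcase is also fine. The gap is exactly where you flagged it, and it is not a technicality: the basis $G\cup\{b\}$ need not be closer to the origin at all. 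Membership of $V(G)$ in the open halfspace $H_z^-$ gives no upper bound on $\operatorname{dist}\bigl(0,\conv(V(G\cup\{b\}))\bigr)$; the points of $V(G)$ can lie in $H_z^-$ while being arbitrarily far from both $0$ and $z$ (far off to the side, nearly parallel to $H_z$), in which case $\conv(V(G\cup\{b\}))$ is much farther from the origin than $\delta$. The descent in the other cases terminates only because the new hull contains the old closest point $z$; here $G$ is chosen by purely matroidal spanning considerations inside $\operatorname{cl}(I)$ and carries no geometric relation to $z$, so your potential function can increase and the induction does not close.

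This obstruction is structural rather than an oversight. The paper does not prove Corollary~\ref{cor:corank2sets} by an exchange argument: it obtains it as a direct specialization of Holmsen's Theorem~\ref{thm:Holmsen}, since ``$S$ meets every basis in at least two elements'' is exactly ``$\rho(E\setminus S)\le d-1$'' once one truncates to $\rho(E)=d+1$, and Holmsen's proof (like Kalai--Meshulam's, and like the cited proofs of Theorem~\ref{thm:strong+Barany}) is homological. Your hard case $|I|=d$ is precisely the configuration that forces the topological detour; note that the paper's elementary two-cocircuit swap is carried out only in the tropical setting (Theorem~\ref{thm:tropical+weak+Holmsen}), where the potential is the \emph{number of covered coordinates} rather than a Euclidean distance, so no analogue of your degenerate case arises. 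To salvage an elementary proof you would need a genuinely new potential that provably decreases when all origin-side points lie in $\operatorname{cl}(I)$; the mere coexistence of $V(G)\subset H_z^-$ and $V(b)\in H_z^+$ cannot supply it.
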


For tropical convexity, we obtain a matroid generalization of the union of two color classes, which is slightly weaker than the condition in Corollary~\ref{cor:corank2sets}. 

\begin{theorem} \label{thm:tropical+weak+Holmsen}
    Let $M = (E,\Ind)$ be a matroid of rank $d+1$ and $V : E \to \TTmax^d$.
    Assume that
    \begin{equation}
        0 \in \tconv(V(C \cup D)) \quad \text{ for any two distinct cocircuits
        } C, D \text{ of } M.
    \end{equation}
    Then there exists a basis $B_0$ such that $0 \in \tconv(V(B_0))$.
\end{theorem}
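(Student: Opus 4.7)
The plan is to adapt the greedy exchange argument from the proof of Theorem~\ref{thm:tropicalKM}, upgrading its pigeonhole step so that the weaker, pair-of-cocircuits hypothesis suffices. Throughout I work in the covector graph $G = G_V$ of the origin and pick a basis $B$ maximizing $|\nbhdG(B)|$. If $\nbhdG(B) = [d+1]$ then Lemma~\ref{lem:tropical+Farkas} gives the conclusion, so assume some $\ell \in [d+1]$ is missing from $\nbhdG(B)$, and set $k := |\nbhdG(B)| \le d$.

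The key new ingredient is that $B$ must contain at least \emph{two} redundant elements, where $b \in B$ is \emph{redundant} if $\nbhdG(B \setminus b) = \nbhdG(B)$. To prove this, set $X_b := \nbhdG(b) \setminus \nbhdG(B \setminus b)$; the sets $X_b$ for $b \in B$ are pairwise disjoint subsets of $\nbhdG(B)$, and $b$ is redundant iff $X_b = \emptyset$. If only one element $b^\star$ were redundant, the remaining $d$ sets $X_b$ would each be non-empty and pairwise disjoint in $\nbhdG(B)$, forcing $k = d$, each $|X_b| = 1$, and $\bigcup_{b \ne b^\star} X_b = \nbhdG(B)$. By the uniqueness of coverage encoded in the $X_b$, $\nbhdG(b^\star)$ is then disjoint from every $X_b$, hence $\nbhdG(b^\star) = \emptyset$, contradicting the fact that every point of $\TTmax^d$ lies in some max-sector of the origin. (For $k < d$ the bound $|B| - k \ge 2$ already forces two redundant elements.)

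Next, the pair-of-cocircuits hypothesis implies that for each coordinate $\ell' \in [d+1]$ at most one cocircuit $D$ can satisfy $\ell' \notin \nbhdG(D)$, since two such cocircuits $D_1, D_2$ would violate $0 \in \tconv(V(D_1 \cup D_2))$ via Lemma~\ref{lem:tropical+Farkas}. To finish, pick two redundant elements $b, b' \in B$. Their fundamental cocircuits $C, C'$ for $B$ are distinct, since $C \cap B = \{b\} \ne \{b'\} = C' \cap B$, so by the previous observation at least one of them, say $C$, contains an element $c$ with $\ell \in \nbhdG(c)$. Because $\ell \notin \nbhdG(B)$ we have $c \ne b$, so $c \in E \setminus B$ and $B \setminus \{b\} \cup \{c\}$ is a basis. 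Redundancy of $b$ then gives $\nbhdG(B \setminus \{b\} \cup \{c\}) = \nbhdG(B) \cup \nbhdG(c) \supsetneq \nbhdG(B)$, contradicting maximality of $|\nbhdG(B)|$.

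The main obstacle is the strengthened pigeonhole in the first step: the proof of Theorem~\ref{thm:tropicalKM} needed only a single redundant element, supplied by a direct $B$-versus-cocircuit use of the hypothesis. Here the hypothesis only constrains pairs of cocircuits, and it is precisely the existence of two distinct fundamental cocircuits at $B$ (guaranteed by the two redundant elements) that allows the pair-hypothesis to be invoked.
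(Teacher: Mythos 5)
Your proposal is correct and follows essentially the same route as the paper: translate to the covector graph, locate two elements of $B$ whose removal does not shrink $\nbhdG(B)$, and apply the pair-of-cocircuits hypothesis to their two (distinct) fundamental cocircuits to produce an improving basis exchange. Your disjoint-private-neighborhoods argument for the existence of two redundant elements is in fact a more careful rendering of the paper's pigeonhole step (which invokes a surjective choice map $\phi$ whose existence is not justified there), so your write-up is, if anything, tighter than the original.
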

\begin{proof}
    Again using Lemma~\ref{lem:tropical+Farkas}, we translate the claim into a
    problem on the covector graph $G_V$ of $0$ with respect to $V(E)$.  Let $B$
    be a basis whose neighborhood $\nbhdG(B)$ does not cover $[d+1]$.  We pick
    a surjective map $\phi$ from $B$ to $\nbhdG(B)$.  Since $|B| = d+1 >
    |\nbhdG(B)|$, there are two elements $p,q \in B$ with $\phi(p) = \phi(q)$.
    Let $C_p, C_q$ be cocircuits with $C_p \cap B = \{p\}$ and $C_q \cap B =
    \{q\}$. As $\nbhdG(C_p \cup C_q) = [d+1]$, there is an element $r \in C_p
    \cup C_q$ with $\nbhdG(r) \setminus \nbhdG(B) \neq \emptyset$.  We can
    assume that $r \in C_p$.  We conclude that $B \setminus \{p\} \cup \{r\}$
    is a basis and covers more elements than $B$.
\end{proof}

\subsection{Support complexes} \label{sub:homological}

The proofs in~\cite{KalaiMeshulam:2005} and~\cite{Holmsen:2016} rely on
topological methods. The \Def{support complex} (or
\Def{avoiding complex} in~\cite{AdiprasitoPadrolSanyal:2019}) of $V : E \to
\RR^d$ is the abstract simplicial complex $\Delta_V \subseteq 2^E$ with
simplices
\[
    \sigma \in \Delta_V \quad \Leftrightarrow \quad 0 \not\in \conv(V(\sigma))
\]
for $\sigma \subseteq E$. The structure of support complexes is special. In
both papers, the proofs make use of the fact that the complexes are
$d$-collapsible or \emph{near $d$-Leray}, which is a condition on the vanishing of
the homology of links of $\Delta_V$.

In the tropical setting, the \Def{tropical support complex} can be read off
the covector graph $G_V$
\[
    \Delta_V \ = \ \{ \sigma \subseteq E : \nbhdG(\sigma) \neq [d+1] \}  .
\]

The following result shows that the topological approach does not work in the
tropical setting.

\begin{proposition}\label{prop:support}
    For any simplicial complex $\Delta \subseteq 2^{E}$ with $m$ facets, there
    is $V : E \to \TTmax^{m-1}$ such that $\Delta = \Delta_V$.
\end{proposition}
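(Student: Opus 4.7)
My plan is to construct $V$ explicitly so that, after identifying the $m$ sectors of the covector graph at the origin with the facets $F_1, \dots, F_m$ of $\Delta$, we have
\[
    \nbhdG(V(e)) \;=\; \{\, i \in [m] : e \notin F_i \,\}
\]
for every $e \in E$. Granted this identity, the union description of $\nbhdG$ yields $\nbhdG(\sigma) = \{i : \sigma \not\subseteq F_i\}$, so $\nbhdG(\sigma) \neq [m]$ iff $\sigma$ sits in some facet iff $\sigma \in \Delta$. Combined with the covector-graph formula for $\Delta_V$ displayed just before the proposition, this would give $\Delta = \Delta_V$.

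To build such a $V$, I exploit that $v \in \TTmax^{m-1}$ lies in the $i$-th sector $S_i(0)$ iff $v_i$ is the maximum of its coordinates together with the implicit $v_m = 0$. I single out $F_m$ to play the role of this implicit coordinate and distinguish according to whether $e \in F_m$. If $e \notin F_m$, I set $V(e)_i = 0$ when $e \notin F_i$ and $V(e)_i = -1$ when $e \in F_i$ (for $i \in [m-1]$); then the maximum is $0$ and is attained exactly in the coordinates indexing facets that miss $e$, including $F_m$. If $e \in F_m$, I set $V(e)_i = 1$ when $e \notin F_i$ and $V(e)_i = 0$ when $e \in F_i$; the maximum now jumps strictly above the implicit $0$ at position $m$, and the argmax again records precisely $\{i : e \notin F_i\}$. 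A short sign check in each case gives the desired neighborhood.

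The main obstacle is the edge case of an element $e^\ast$ contained in every facet of $\Delta$: the recipe then forces $\nbhdG(V(e^\ast)) = \emptyset$, which is incompatible with any point of $\TTmax^{m-1}$. I plan to handle this by a preliminary reduction, deleting cone vertices from $E$, realizing the reduced complex by the recipe above, and then reinserting each $V(e^\ast)$ so that its single argmax coordinate is covered by the remaining configuration on every face of $\Delta$; this is possible because after the reduction the facets of the smaller complex still cover all faces in a way compatible with a chosen sector. Once the reduction and the case analysis above are in place, a final appeal to Lemma~\ref{lem:tropical+Farkas} converts the combinatorial equality of neighborhoods into the geometric statement $0 \notin \tconv(V(\sigma))$ and delivers $\Delta = \Delta_V$.
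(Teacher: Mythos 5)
Your core construction coincides with the paper's: both realize the bipartite graph on $E \sqcup [m]$ in which the neighbourhood of sector $i$ is $E \setminus F_i$, and then observe that $\nbhdG(\sigma) = \{i : \sigma \not\subseteq F_i\}$ fails to equal $[m]$ exactly when $\sigma$ lies in some facet. Your explicit coordinates do correctly realize any prescribed family of \emph{nonempty} neighbourhoods (the paper settles for ``it is easy to find $V$''), and you have correctly isolated the one obstruction: an element $e^\ast$ contained in every facet would need $\nbhdG(e^\ast) = \emptyset$, which is impossible because every point of $\TTmax^{m-1}$ lies in at least one sector.

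However, your proposed repair of this degenerate case cannot succeed, because the statement itself fails in $\TTmax^{m-1}$ whenever such an $e^\ast$ exists. Concretely, take $E = \{a,b,c\}$ with facets $F_1 = \{a,b\}$ and $F_2 = \{a,c\}$, so $m = 2$ and you must work in $\TTmax^1$ with two sectors. If $\Delta = \Delta_V$, then $\nbhdG(\{b,c\}) = [2]$ while $\nbhdG(F_1), \nbhdG(F_2) \neq [2]$; since $\nbhdG(a) \neq \emptyset$, say $1 \in \nbhdG(a)$, the condition on $F_1$ forces $\nbhdG(a) = \nbhdG(b) = \{1\}$, and likewise $\nbhdG(c) = \{1\}$, contradicting $\nbhdG(\{b,c\}) = [2]$. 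In general, if every facet contains $e^\ast$ and $m \ge 2$, then $F_j \cup F_k \notin \Delta$ for $j \neq k$, so choosing $i_j \notin \nbhdG(F_j)$ gives $i_j \in \nbhdG(F_k)$ for all $k \neq j$; the $i_j$ are therefore distinct, and after relabelling $\nbhdG(F_j) = [m]\setminus\{j\}$, whence $\nbhdG(e^\ast) \subseteq \bigcap_j \nbhdG(F_j) = \emptyset$, a contradiction. So no reinsertion of $e^\ast$, after whatever preliminary reduction, can work. The honest conclusion is that the proposition --- and the paper's own proof, which silently assigns $e^\ast$ an empty neighbourhood --- requires either the hypothesis that no element lies in all facets, or one extra dimension: in $\TTmax^{m}$ one can give every $e$ the neighbourhood $\{i \in [m] : e \notin F_i\} \cup \{m+1\}$, which is always nonempty and still yields $\Delta = \Delta_V$. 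Either amendment preserves the purpose of the proposition, namely that tropical support complexes carry no topological restriction.
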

\begin{proof}
    Let $\sigma_1,\dots,\sigma_m \subseteq E$ be the facets of $\Delta$.
    Consider the bipartite graph $G$ on $E \sqcup [m]$ such that the
    neighborhood of $i \in [m]$ is precisely $E \setminus \sigma_i$. It is
    easy to find $V : E \to \TTmax^{m-1}$ such that $G = G_V$. The
    inclusion-maximal subsets $\sigma \subseteq E$ whose neighborhoods fail to
    cover $[m]$ miss some $i \in [m]$ and hence the facets of $\Delta_V$ are
    exactly $\sigma_1,\dots,\sigma_m$.
\end{proof}

\section{Tropicalized Colorful Linear Programming} \label{sec:LP}
Colorful linear programming was introduced in~\cite{BaranyOnn:1997} as an
algorithmic framework originating from the Colorful Carath{\'e}odory
Theorem~\cite{Barany:1982}: Given $C_1,\dots,C_k \subset \RR^d$ and $b \in
\RR^d$, decide if there is $p_i \in C_i$ for $i=1,\dots,k$ such that $b \in
\conv(p_1,\dots,p_k)$. If $k=d+1$ and $C_1 = C_2 = \cdots = C_{d+1}$, then
this is precisely linear programming.

Many interesting problems can be modelled as colorful linear programs and it
continues to be a topic of active research;
see~\cite{MeunierSarrabezolles:2018}.  In this section, we consider tropical
colorful linear programming and we show that it is NP-complete.

For classical colorful linear programming, NP-completeness was already shown
by B\'ar\'any and Onn~\cite{BaranyOnn:1997}. It might therefore not come as a
surprise that the same holds true in the tropical setting. This intuition,
however, is misguided. To set the stage, we start with a discussion on
tropical integer linear programming that, unlike its classical counterpart, is
in the complexity class $P$.

\subsection{Tropicalized Integer Programming} 
\newcommand\HG{\mathcal{H}}%
The 3-dimensional matching problem is one of the classical NP-complete
problems whose hardness was already established by Karp in his seminal
paper~\cite{Karp:1972}. 
In the exact cover version, we are given three
disjoint sets $A,B,C$ of the same finite cardinality $k$. Given a $3$-uniform
$3$-partite hypergraph $\HG$ on $U := A \cup B \cup C$, namely a collection
of hyperedges $\HG$, where $|h \cap A| = |h \cap B| = |h \cap C| = 1$ for each
$h \in \HG$. A \Def{$\mathbf 3$-dimensional matching} is a subset $M \subseteq
\HG$ such that
\[
    \bigcup_{w \in M} w = A \cup B \cup C \qquad \text{ and } \qquad u \cap v =
\emptyset \quad \text{ for all } u,v \in M, u \neq v.
\]
The algorithmic problem can be formulated as a $0/1$-integer program where not
only the variables but also the coefficients are from $\{0,1\}$.
We introduce a variable $x_h$ for each hyperedge $h \in \HG$. The existence of
a $3$-dimensional matching is equivalent to the feasibility of the system
\begin{equation} \label{eq:3dmIP}
  \begin{aligned}
    \sum_{h \ni w} x_h = 1 & \text{ for all } w \in A \cup B \cup C
    , \\
    x_h \in  \{0,1\}  & \text{ for all } h \in \HG.
  \end{aligned}
\end{equation}
This demonstrates that already this restricted version of integer linear
programming is NP-complete.  

Butkovic~\cite{Butkovic:2019} proposed a version of tropical integer linear
programming based on the lattice points $\mathbb{Z} \subset \mathbb{R}$. 
However, unlike for classical integer programming, these lattice points do not behave well with the tropical operations `$\max$' and `$+$'.

A different approach comes from observing that $-\infty$ and $0$ are the tropical additive and multiplicative elements, respectively. 
The corresponding class of tropical $0/1$-integer programs are
systems of inequalities where the coefficients are restricted to $0$ or
$-\infty$, the neutral elements for tropical addition and multiplication.

For $J \subseteq [n]$ a tropical equation
\[
    \bigoplus_{k \in J} x_k \ = \ 0
\]
with the additional condition $x_k \in \{-\infty,0\}$ corresponds to a
system of inequalities
\begin{equation} \label{eq:zero+inf+inequality}
    x_i \ \le \ \max_{k \in J \setminus i} x_k  \, ,
\end{equation}
with $x_k \in \{-\infty,0\}$, for each $i \in J$.

\newcommand\True{\texttt{true}}%
\newcommand\False{\texttt{false}}%
Associating a Boolean variable $z_i \in \{\bot,\top\}$ for each $x_i$ with
$z_i = \bot$ if and only if $x_i = -\infty$, we can interpret such a tropical
inequality as a Boolean formula
\[
    z_i \Rightarrow \bigvee_{j \in J} z_j
    \quad \text{ or, equivalently, } \quad
    \neg z_i \vee \bigvee_{j \in J} z_j \enspace .
\]

This is a dual Horn clause. A system of
equations~\eqref{eq:zero+inf+inequality} comprises a conjunction of these
clauses. Hence, the feasibility of such a tropical 'integer' program is
equivalent to the satisfiability problem for Horn formulae. This is known to
be linear-time solvable~\cite{DowlingGallier:1984} but also P-complete as
illustrated in the recent survey~\cite{AusielloLaura:2017}. 

\begin{theorem}
    Tropical $\{-\infty,0\}$-integer linear programs can be solved in polynomial
    time.
\end{theorem}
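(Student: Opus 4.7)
The plan is to use the translation to dual Horn clauses already set up in the paragraphs preceding the theorem. Given a tropical $\{-\infty,0\}$-integer program consisting of equations $\bigoplus_{k \in J_\alpha} x_k = 0$ over variables $x_1,\dots,x_n$ with $x_k \in \{-\infty,0\}$, the first step is to verify that each such equation admits a feasible assignment if and only if the corresponding conjunction of dual Horn clauses $\neg z_i \vee \bigvee_{j \in J_\alpha} z_j$ (for $i \in J_\alpha$) is satisfiable, under the bijection $z_i = \top \Leftrightarrow x_i = 0$. This is routine: the inequality \eqref{eq:zero+inf+inequality} is vacuously satisfied when $x_i = -\infty$, and requires some $x_k = 0$ with $k \in J_\alpha \setminus i$ when $x_i = 0$, matching the clause precisely. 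The entire tropical program translates, in linear time and with only a linear blow-up, into a conjunction of dual Horn clauses, since each equation produces $|J_\alpha|$ clauses each of length $|J_\alpha|$.

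The second step is to reduce dual Horn satisfiability to Horn satisfiability by substituting $z_i \mapsto \neg z_i$ throughout: a dual Horn clause (at most one negative literal) becomes a Horn clause (at most one positive literal), and this transformation is linear. Applying the Dowling--Gallier linear-time algorithm \cite{DowlingGallier:1984} for Horn satisfiability then decides feasibility and, in the positive case, returns a satisfying assignment, which we translate back to a feasible $\{-\infty,0\}$-valued point of the tropical program.

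Since both translations are linear in the encoding size of the tropical system and the decision procedure itself is linear, the overall algorithm runs in polynomial (in fact linear) time. The main conceptual point to highlight is simply that $\{-\infty,0\}$-valuations are Boolean in disguise and that the tropical "max = 0" constraint is precisely an implication; once this is made explicit, the result is a direct consequence of a classical algorithmic theorem, and no genuine obstacle remains.
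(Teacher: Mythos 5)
Your proposal is correct and follows essentially the same route as the paper: the paper's ``proof'' is precisely the preceding discussion translating the $\{-\infty,0\}$-constraints \eqref{eq:zero+inf+inequality} into a conjunction of dual Horn clauses and invoking the linear-time Horn satisfiability algorithm of Dowling--Gallier. Your only addition is spelling out the (trivial) complementation $z_i \mapsto \neg z_i$ turning dual Horn into Horn, which the paper leaves implicit.
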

A slightly different point of view is taken in~\cite{MoeSkutStork}. They
consider scheduling with AND-OR networks which in its most general form
is equivalent to tropical linear programming. The problem, which they consider
in~\cite[Theorem~3.1]{MoeSkutStork}, is essentially the same as here.  The
unifying language of hypergraph reachability was already used
in~\cite{AllamigeonGaubertGoubault:2013} to express a similar problem for
tropical polyhedra.  It can also be seen from the point of constraint
satisfaction problems, see~\cite{BodirskyMamino:2018}. 

\begin{example} \label{ex:trivial+tropical+3dm}
    The direct `tropicalization' of~\eqref{eq:3dmIP} results in
    \[
    A \odot x \ = \ \Zero \quad \text{ with }x \in \{-\infty,0\}^{\HG},
    \]
    where $A \in \{-\infty,0\}^{U \times \HG}$ is the hyperedge-node-incidence
    matrix of the hypergraph $\HG$.

    The idempotency of $\max$ significantly weakens constraints.  While the
    equations arising in~\eqref{eq:3dmIP} enforce that exactly one part of the
    tripartition $A \cup B \cup C$ is hit, the tropical equations only encode
    the covering condition. In particular, already the choice $x = \Zero$
    serves as solution. As stated in the discussion
    after~\cite[Corollary~3.1.3]{Butkovic:2010}, this problem becomes
    NP-complete only by imposing an additional minimality condition. 
\end{example}

\subsection{Tropicalized Colorful Linear Programming} \label{sub:tropicalized+CLP}
The tropical version of the `positive dependence version' treated
in~\cite[Sect.~5]{BaranyOnn:1997} is the following problem. Let $A \in
\TTmax^{d \times n}$ and $C_1 \sqcup C_2 \sqcup \cdots \sqcup C_r = [n]$ a
partition. Find $x \in \TTmax^{n}$ such that 
\begin{equation} \label{eq:solution+matching}
    A \odot x \ = \ \Zero  \quad \text{ and } \quad |\supp(x) \cap C_i | = 1 \text{
    for all } i =1,\dots,r \, .
\end{equation}
Here $\supp(x) = \{ i : x_i \neq -\infty \}$. The following is a tropical
analogue to \cite[Theorem 5.1]{BaranyOnn:1997}.

\begin{theorem} \label{prop:one+sided+colourful}
    Tropical colorful linear programming is NP-complete.
\end{theorem}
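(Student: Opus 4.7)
The plan is to treat NP-membership and NP-hardness separately. For membership in NP, the support $S=\supp(x)\subseteq[n]$ of any feasible solution has exactly $r$ elements by the color constraint and so serves as a polynomial-size certificate. Given a candidate $S$, deciding whether there exist finite $x_j$ for $j\in S$ with $\max_{j\in S}(A_{ij}+x_j)=0$ for all rows $i$ is a tropical linear feasibility problem on $r$ unknowns and $d$ equations, which can be solved in polynomial time by standard tropical techniques (e.g.\ by computing the principal solution and verifying it).

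For NP-hardness, I would reduce from 3SAT. Given Boolean variables $v_1,\dots,v_r$ and clauses $D_1,\dots,D_d$, set $n=2r$ with indices $v_1^+,v_1^-,\dots,v_r^+,v_r^-$, define color classes $C_i:=\{v_i^+,v_i^-\}$, and define $A\in\TTmax^{d\times 2r}$ by $A_{c,\ell}=0$ if the literal $\ell$ appears in clause $D_c$ and $A_{c,\ell}=-\infty$ otherwise. The construction has polynomial size in the input.

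The equivalence is direct. From a satisfying assignment $\alpha$, set $x_{v_i^+}=0$ when $\alpha(v_i)=\True$ and $x_{v_i^+}=-\infty$ otherwise, with $x_{v_i^-}$ defined dually. The color constraint is immediate, and for each clause $D_c$ some satisfying literal $\ell$ contributes $A_{c,\ell}+x_\ell=0$ to the tropical row sum while all other $\ell'\in D_c$ contribute $-\infty$, giving $\max_{\ell'\in D_c}(A_{c,\ell'}+x_{\ell'})=0$. Conversely, given a feasible $x$, define $\alpha(v_i)=\True$ iff $v_i^+\in\supp(x)$; this is well defined by the color constraint, and the row equation $\max_{\ell\in D_c}x_\ell=0$ forces some literal of $D_c$ into $\supp(x)$, so $D_c$ is satisfied by $\alpha$.

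The main obstacle is conceptual rather than technical: as Example~\ref{ex:trivial+tropical+3dm} shows, the idempotency of tropical addition typically collapses ``exactly one'' constraints to ``at least one,'' which is precisely why the naive tropicalization of the 3-dimensional matching reduction in~\cite{BaranyOnn:1997} does not preserve hardness. Here, however, the colorful constraint $|\supp(x)\cap C_i|=1$ supplies the missing ``exactly one'' side, while the literal--clause incidence of 3SAT is already an ``at least one'' condition that survives tropicalization intact.
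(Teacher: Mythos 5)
Your proof is correct, but it takes a genuinely different route from the paper. The paper reduces from the 3-dimensional matching problem, following B\'ar\'any--Onn: it builds one point $v_h\in\TTmax^{U}$ per hyperedge $h$, colors the hyperedges by their $A$-vertex, and shows via the covector graph (Lemma~\ref{lem:tropical+Farkas}) that a feasible colorful $x$ has support of size $k$ covering all $3k$ nodes, hence a perfect matching. Your reduction from 3SAT is more elementary and arguably more transparent: the color classes $\{v_i^+,v_i^-\}$ directly encode truth assignments and the rows directly encode clauses, so no counting argument is needed to recover the combinatorial object. The paper's choice of 3DM buys a thematic payoff --- it contrasts directly with Example~\ref{ex:trivial+tropical+3dm}, showing that the very instance whose naive tropicalization becomes trivial is rescued by the colorful constraint --- and your closing paragraph correctly identifies this as the conceptual point. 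Two small remarks: in your forward direction, other satisfied literals of $D_c$ contribute $0$ rather than $-\infty$, but the maximum is still $0$ since all finite entries of $A$ and $x$ are $0$, so nothing breaks; and your NP-membership argument via the support as certificate (plus the principal solution of the one-sided system on the selected columns) is actually slightly more careful than the paper's one-line assertion that feasibility of a given $x$ is checkable in polynomial time.
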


Note that a similar argument on NP-completeness of \emph{unique solvability}
already occurs in~\cite[Section 3.1]{Butkovic:2010}.

\begin{proof}
    It is clear that the feasibility of some $x \in \TTmax^n$ can be checked
    in polynomial time and the NP-hardness is the nontrivial part.
    To prove hardness, we use a similar construction as in
    Example~\ref{ex:trivial+tropical+3dm}. Let $\HG$ be $3$-uniform
    $3$-partite hypergraph on nodes $U = A \sqcup B \sqcup C$ with $|A| = |B|
    = |C| = k$. For every edge
    $h = \{a,b,c\} \in \HG$, define $v_h \in \TTmax^{U}$ by
    \[
        (v_h)_r \ = \ 
            \begin{cases}
                0 & \text{ if } r \in \{a,b,c\} \\
                -\infty & \text{ otherwise.} 
            \end{cases}
            \]
    For $V = \{ v_h : h \in \HG \}$, we use~\eqref{eq:aff+point+sector} to see
    that the covector graph $G_V$ for the point $p = 0$ is the bipartite graph
    on nodes $\HG \sqcup U$ with edges $(h,a)$, $(h,b)$ and $(h,c)$ for
    every $h = \{a,b,c\} \in \HG$. In particular, every $h \in \HG$ has three
    incident edges in $G_V$. 
    
    Now define color classes $C_a = \{ (a,b,c) \in \HG : b \in B, c \in C\}$
    for $a \in A$ and let $x \in \TTmax^\HG$ such that
    \[
        \bigoplus_{h \in \HG} v_h \odot x_h \ = \ 0
    \]
    with $|\supp(x) \cap C_a| = 1$ for all $a \in A$. Then
    Lemma~\ref{lem:tropical+Farkas} yields that $M = \supp(x)$ covers $A$,
    $B$, and $C$. Since $|M| = |A| = |B| = |C|$, it follows that $M$ is in
    fact a matching.
\end{proof}

\begin{remark}
    A potential application of the above described version of tropical
    colorful linear programming to \emph{constrained scheduling problems}
    arises in the following way.  By replacing
    everything with its negative, we can switch from $\max$ to $\min$; in
    particular, this involves replacing $-\infty$ by $+\infty$.  Consider
    again Equation~\eqref{eq:solution+matching}.  We want to finish $d$ jobs
    at fixed termination times (the vector $\Zero$).  They are based on fixed
    processing times (the entries of $A$).  The columns of $A$ correspond to
    $n$ different ways to finish a job.  For each job (resp. row), at least
    one way has to be finished at the termination time.  The color classes can
    be used to model additional production constraints.  For each way of
    finishing a job (resp. the columns of $A$), only one of the potential
    approaches in a color class $C_i$ for $i \in [r]$ can be chosen.  This
    restriction could be imposed if all approaches in a color class are
    executed on the same unit. 
\end{remark}

\section{Afterthoughts and questions}\label{sec:end}

\subsection{Polymatroid Carath\'eodory theorem}

Theorems~\ref{thm:KM} and~\ref{thm:tropicalKM} can be phrased completely in
terms of the rank function $\rho : 2^E \to \ZZ_{\ge0}$ as independent sets
satisfy $\rho(I) = |I|$. In this form, Theorem~\ref{thm:tropicalKM} is
reminiscent of Rado's generalization of Hall's marriage theorem.
Welsh~\cite{Welsh:1971} further generalized Rado's result to polymatroids. A
\Def{polymatroid} is a function $f : 2^E \to \ZZ_{\ge0}$ that is monotone with
respect to inclusion and submodular $f(A \cup B) + f(A \cap B) \le f(A) +
f(B)$ for all $A,B \subseteq E$. See~\cite[Sect.~11.2]{Oxley:2011} for a
discussion of all three results.

\begin{theorem} \label{thm:independent+transversal}
    Let $(A_j)_{j \in J}$ be a family of non-empty subsets of $E$ and let $f :
    2^E \to \ZZ_{\ge0}$ be a polymatroid. Then there is a choice $e_j \in A_j$
    for $j \in J$ such that
    \[
        f(\{ e_j : j \in K\}) \ \ge \ |K| \quad \text{ for all } K \subseteq J
    \]
    if and only if
    \[
        f(A(K)) \ \ge \ |K| \quad \text{ for all } K \subseteq J\ ,
    \]
    where $A(K) = \bigcup_{j \in K} A_j$.
\end{theorem}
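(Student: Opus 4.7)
The forward direction is immediate from monotonicity of the polymatroid: if $e_j \in A_j$ satisfies $f(\{e_j : j \in K\}) \geq |K|$ for every $K \subseteq J$, then since $\{e_j : j \in K\} \subseteq A(K)$, monotonicity of $f$ gives $f(A(K)) \geq f(\{e_j : j \in K\}) \geq |K|$. No further structure of $f$ is needed here.

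For the backward direction, my plan is to reduce to Rado's classical matroid transversal theorem through the standard representation of an integer polymatroid as the pullback of a matroid rank function. Concretely, I would invoke the well-known fact that every integer polymatroid $f$ on $E$ admits a matroid $M = (E', \rho_M)$ together with a surjection $\pi : E' \to E$ satisfying $f(A) = \rho_M(\pi^{-1}(A))$ for all $A \subseteq E$; one concrete construction inflates each $e \in E$ into $f(\{e\})$ parallel copies in $E'$ and builds $M$ in the spirit of the treatment in~\cite[Sect.~11.2]{Oxley:2011}. Setting $A'_j := \pi^{-1}(A_j)$, the Hall-type hypothesis on $f$ transcribes to $\rho_M(A'(K)) \geq |K|$ for all $K \subseteq J$, which is precisely Rado's condition in $M$.

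Applying Rado's theorem to $(A'_j)_{j \in J}$ in $M$ then yields pairwise distinct $e'_j \in A'_j$ with $\{e'_j : j \in J\}$ independent. I would set $e_j := \pi(e'_j) \in A_j$. For any $K \subseteq J$, the independent set $\{e'_j : j \in K\}$ sits inside $\pi^{-1}(\{e_j : j \in K\})$ and has cardinality $|K|$, so
\[
    f(\{e_j : j \in K\}) \ = \ \rho_M(\pi^{-1}(\{e_j : j \in K\})) \ \geq \ |K|,
\]
completing the plan.

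The hardest step is the polymatroid-to-matroid representation invoked above: though standard, it is itself a nontrivial lemma, and a proof that goes through this black box may feel heavier than warranted. A more self-contained alternative would proceed by induction on $\sum_{j \in J}(|A_j| - 1)$. The base case where all $A_j$ are singletons is immediate; in the inductive step, one fixes $j_0$ with $|A_{j_0}| \geq 2$ and tries to shrink $A_{j_0}$ to $A_{j_0} \setminus \{a\}$ for some $a$ while preserving the hypothesis. If no such $a$ works, one picks distinct $a, b \in A_{j_0}$ with witness sets $K_a, K_b \ni j_0$ of failure and applies submodularity of $f$ to $A(K_a)$ and $A(K_b)$. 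The delicate issue in this direct approach is that $a$ or $b$ may reappear in $A_j$ for additional $j \in K_a \cap K_b$, so a careful analysis of where $a$ and $b$ genuinely contribute to the value of $f$ is required in order to convert submodularity plus integrality of $f$ into the desired contradiction.
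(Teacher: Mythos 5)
The paper does not prove this statement: it is quoted as Welsh's polymatroid generalization of Rado's theorem, with pointers to \cite{Welsh:1971} and \cite[Sect.~11.2]{Oxley:2011}, so there is no in-paper argument to compare against. Judged on its own, your proof is correct. The forward direction by monotonicity is exactly right. For the converse, reducing to Rado's theorem via the expansion of an integer polymatroid into a matroid (the Helgason/Lov\'asz/McDiarmid construction with rank function $\rho_M(S)=\min_{A\subseteq E}\bigl(f(A)+|S\setminus\pi^{-1}(A)|\bigr)$ on the inflated ground set) is a standard and legitimate route, and your transcription of the Hall--Rado condition and the pullback of the independent transversal are both sound. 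One small point you should make explicit: Rado's theorem requires the sets $A_j'=\pi^{-1}(A_j)$ to be non-empty, which is not automatic since $\pi^{-1}(e)=\emptyset$ when $f(\{e\})=0$; it does follow from the hypothesis with $K=\{j\}$, because $1\le f(A_j)\le\sum_{e\in A_j}f(\{e\})$ by submodularity (under the usual normalization $f(\emptyset)=0$, which the paper's definition leaves implicit), so some $e\in A_j$ has a copy in $E'$. Your fallback inductive sketch is essentially Welsh's original argument, i.e.\ the classical Rado induction run with $f$ in place of a rank function; the difficulty you flag there about $a$ or $b$ recurring in other $A_j$ is in fact a non-issue, since the standard computation only uses $X\cup Y\supseteq A(K_a\cup K_b\cup\{j_0\})$ and $X\cap Y\supseteq A(K_a\cap K_b)$ for the two failure witnesses $X=(A_{j_0}\setminus a)\cup A(K_a)$ and $Y=(A_{j_0}\setminus b)\cup A(K_b)$ (with $j_0\notin K_a,K_b$), together with monotonicity, and never needs to locate $a$ and $b$.
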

For $f = \rho$, this is Rado's result. For $f(A) = |A|$, this gives Hall's
marriage theorem.

\begin{question}
    Can Theorems~\ref{thm:KM} and~\ref{thm:tropicalKM} can be further
    generalized to polymatroids. 
\end{question}

A first attempt for a very restricted class of polymatroids was done
in~\cite{Sheikh}.
While Theorem~\ref{thm:independent+transversal} is actually a consequence of Edmond's matroid intersection, we also get another connection with the interplay of two matroids arising from the consideration of generic configurations.
As we saw in Section~\ref{sub:greedy}, the containment property for the tropical convex hull translated to a covering property in the associated covector graph.

For the case that $0$ is in generic position with respect to $V$, we can give
a necessary and sufficient condition.  The genericity means that in the
covector graph $G$ each node in $E$ has degree $1$. Equivalently, the set
system $(\nbhdG(i))_{i\in [d+1]}$ forms a partition matroid on $E$. 

The point $p$ is contained in the tropical convex hull of $V(I)$ for an
independent set $I$ of $M$ if and only if $I$ is a spanning set of the
partition matroid $(\nbhdG(i))_{i\in [d+1]}$. 

This means that $(\nbhdG(i))_{i\in [d+1]}$ contains a transversal $(x_i)_{i
\in [d+1]} \subseteq E$ such that $\{x_i \colon i \in [d+1]\}$ is independent
in $M$. Equivalently, there is a transversal $(x_i)_{i \in [d+1]}$ such that
\[
    \rho\left(\{x_j \colon j \in J\}\right) \geq |J| \qquad \forall J
    \subseteq [d] \enspace .
\]
Indeed, the inequality has to be fulfilled with equality.  Now,
Theorem~\ref{thm:independent+transversal} yields the following equivalence.

\begin{proposition}
    If $0$ is in generic position with respect to $V$ then $0$ is in the
    tropical convex hull of an independent set if and only if
    \[
        \rho( \cup_{j \in J} \nbhdG(j) ) \geq |J|  \qquad \forall J \subseteq
        [d] \enspace .
    \]
\end{proposition}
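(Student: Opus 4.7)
The plan is to translate the tropical containment condition into a purely combinatorial statement about the covector graph $G = G_V$ of $0$ with respect to $V$, and then to apply Rado's Theorem~\ref{thm:independent+transversal} with the rank function $\rho$ of $M$ in the role of the polymatroid $f$.

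First, I would invoke Lemma~\ref{lem:tropical+Farkas} to reformulate the containment: $0 \in \tconv(V(I))$ if and only if $\nbhdG(I) = \bigcup_{e \in I} \nbhdG(e)$ equals $[d+1]$. Under the genericity assumption each node $e \in E$ has exactly one neighbour in $[d+1]$, so the assignment $e \mapsto \nbhdG(e)$ is a well-defined map $E \to [d+1]$ and the fibres $(\nbhdG(j))_{j \in [d+1]}$ partition $E$. Consequently, for a subset $I \subseteq E$ the condition $\nbhdG(I) = [d+1]$ holds precisely when $I$ meets each fibre $\nbhdG(j)$, i.e.\ $I$ contains a system of distinct representatives $(x_j)_{j \in [d+1]}$ of the family $(\nbhdG(j))_{j \in [d+1]}$.

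This reduces the proposition to the following statement: there is an independent set $I \in \Ind$ with $I \supseteq \{x_j : j \in [d+1]\}$ for some transversal $(x_j)$ of $(\nbhdG(j))_{j \in [d+1]}$ if and only if
\[
    \rho\bigl(\bigcup_{j \in J} \nbhdG(j)\bigr) \ \ge \ |J| \qquad \forall J \subseteq [d+1].
\]
Since independence is hereditary, the existence of such an $I$ is equivalent to the existence of a transversal whose image is itself independent in $M$. This is exactly the situation handled by Theorem~\ref{thm:independent+transversal}, applied to the family $A_j := \nbhdG(j)$ with $f := \rho$: the rank function of $M$ is a polymatroid, and Rado's criterion asserts precisely the stated inequality on the unions $A(K) = \bigcup_{j \in K} \nbhdG(j)$.

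There is essentially no obstacle beyond assembling the pieces: Lemma~\ref{lem:tropical+Farkas} handles the geometry, the genericity hypothesis collapses the covering condition to a transversal condition, and Theorem~\ref{thm:independent+transversal} supplies the Hall--Rado equivalence. The only minor point to check is that the transversal produced by Rado does yield an \emph{independent} set of size $d+1$, which is automatic since the inequality forces $\rho(\{x_j : j \in [d+1]\}) \ge d+1$ and hence equality, so $\{x_j : j \in [d+1]\} \in \Ind$.
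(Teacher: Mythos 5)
Your proposal is correct and follows essentially the same route as the paper: genericity makes $(\nbhdG(j))_{j\in[d+1]}$ a partition of $E$, Lemma~\ref{lem:tropical+Farkas} turns containment into the covering condition, and Rado's Theorem~\ref{thm:independent+transversal} with $f=\rho$ gives the stated rank inequality. The only (harmless) discrepancy is that you quantify over $J \subseteq [d+1]$ where the paper writes $J \subseteq [d]$; your indexing is the natural one for Rado's criterion, and the paper's appears to be a typo.
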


\subsection{Convex geometries}

A \Def{convex geometry} or \Def{anti-matroid} is a pair $(E,\tau)$ where $\tau
: 2^E \to 2^E$ is a hull operator that satisfies the \emph{anti-exchange
axiom}: for $A \subseteq E$ and $x,y \not \in \tau(A)$
\[
    x \in \tau( A \cup y) \ \Longrightarrow \ 
    y \not\in \tau( A \cup x).
\]
For example, if $E \subset \RR^d$ is a finite set, then $\tau(A) := \conv(A)
\cap E$ defines a convex geometry. The definition of tropical convex hulls and
Lemma~\ref{lem:tropical+Farkas} implies the following.

\begin{proposition}\label{prop:trop_anti}
    Let $E \subseteq \TTmax^d$ be a finite set. Then $\tau(A) := \tconv(A)
    \cap E$ defines a convex geometry.
\end{proposition}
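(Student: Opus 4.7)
The plan is to verify that $\tau$ is a hull operator and that it satisfies the anti-exchange axiom. The hull-operator conditions ($A \subseteq \tau(A)$, monotonicity, and idempotency $\tau(\tau(A))=\tau(A)$) follow directly from the corresponding properties of $\tconv$ together with $\tconv(\tconv(A)) = \tconv(A)$, so the real work lies in the anti-exchange axiom.

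Fix $A \subseteq E$ and distinct points $x, y \in E \setminus \tau(A)$ with $x \in \tau(A \cup \{y\})$. I would show $y \notin \tau(A \cup \{x\})$ by exhibiting a sector $i \in [d+1]$ that is isolated in the covector graph of $y$ with respect to $A \cup \{x\}$; Lemma~\ref{lem:tropical+Farkas} then forces $y \notin \tconv(A \cup \{x\})$. From $x \in \tconv(A \cup \{y\}) \setminus \tconv(A)$ and the same lemma applied at the point $x$, the set
\[
I \ := \ [d+1] \setminus \bigcup_{a \in A} \neighbour_x(a)
\]
is non-empty and satisfies $I \subseteq \neighbour_x(y)$. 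The key observation, which I view as the main insight to isolate, is that $\neighbour_x(y) \cap \neighbour_y(x) = \emptyset$ whenever $x \neq y$: unfolding~\eqref{eq:aff+point+sector}, the condition $i \in \neighbour_x(y)$ says $y_i - x_i$ attains the maximum of $\{y_k - x_k : k \in [d+1]\}$, while $i \in \neighbour_y(x)$ says it attains the minimum. Both simultaneously force $y - x$ to be constant, and the convention $y_{d+1} - x_{d+1} = 0$ then gives $y = x$.

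It remains to check that any $i \in I$ is also isolated from $A$ in the covector graph at $y$. For $a \in A$, the condition $i \notin \neighbour_x(a)$ produces some $k$ with $a_k - x_k > a_i - x_i$, while $i \in \neighbour_x(y)$ yields $x_k - x_i \geq y_k - y_i$. Chaining,
\[
a_k - a_i \ > \ x_k - x_i \ \geq \ y_k - y_i \,,
\]
so $a_k - y_k > a_i - y_i$ and therefore $i \notin \neighbour_y(a)$. Combined with $i \notin \neighbour_y(x)$ from the key observation, $i$ is isolated in the covector graph of $y$ with respect to $A \cup \{x\}$, as required. The only obstacle I anticipate is handling $x$ or $y$ with $-\infty$ entries, which calls for the version of Lemma~\ref{lem:tropical+Farkas} that the authors omit; but the same inequality chain adapts under the natural conventions for $-\infty$.
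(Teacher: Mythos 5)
The paper states Proposition~\ref{prop:trop_anti} with only a one-line pointer (``the definition of tropical convex hulls and Lemma~\ref{lem:tropical+Farkas} implies\dots''), so there is no written proof to compare against; your covector-graph argument is exactly the verification being left to the reader, and for points with finite coordinates it is complete and correct. The hull-operator axioms are indeed routine, the disjointness $\neighbour_x(y)\cap\neighbour_y(x)=\emptyset$ for distinct finite $x,y$ is the right anti-symmetry statement, and the chain $a_k-a_i>x_k-x_i\ge y_k-y_i$ correctly transports isolation of the sector $i$ from the covector graph at $x$ to the one at $y$. This is a genuine, self-contained proof of the anti-exchange axiom.

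The one real issue is the $-\infty$ case, which you flag but underestimate: your key observation is \emph{false} as stated on $\TTmax^d$. If $x_i=y_i=-\infty$ for some $i\le d$, then every defining inequality of $S_i(x)$ in~\eqref{eq:aff+point+sector} has right-hand side $-\infty$, so $S_i(x)=\TTmax^d$ and likewise $S_i(y)=\TTmax^d$; hence $i\in\neighbour_x(y)\cap\neighbour_y(x)$ even though $x\ne y$. The repair is not merely ``the same inequality chain under natural conventions'' but an extra structural remark: since $S_i(x)=\TTmax^d$ whenever $x_i=-\infty$, every $a\in A$ covers such an $i$, so for $A\ne\emptyset$ every $i\in I$ has $x_i$ finite; at such an index the two sector conditions taken at $k=d+1$ give $y_i\ge x_i$ and $x_i\ge y_i$, hence $x_i=y_i$ is finite, and then $x=y$ follows --- restoring the key observation exactly where you invoke it. (The case $A=\emptyset$ is vacuous because $\tconv(\{y\})=\{y\}$.) You also rely on both directions of the omitted $-\infty$-version of Lemma~\ref{lem:tropical+Farkas}: the direction ``in the hull implies no isolated sector'' follows directly from~\eqref{eq:convex+hull}, while the converse, which you use to get $I\ne\emptyset$ from $x\notin\tconv(A)$, needs the observation that any point covering a sector $i$ with $x_i$ finite has $v_k=-\infty$ wherever $x_k=-\infty$. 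With these additions the proof is sound.
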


This naturally raises the following question:

\begin{question}
    Is there a generalization of Theorems~\ref{thm:KM} and~\ref{thm:Holmsen}
    for convex geometries?
\end{question}

The first question to be answered here is what should take place of the
dimension? A natural choice is the \Def{Carath\'eodory number} of $(E,\tau)$,
that is, the smallest number $c = c(E,\tau)$ such that for any $A \subseteq E$
and $e \in \tau(A)$, there is $A' \subseteq A$ with $|A'| \le c$ and $p \in
\tau(A')$.  For the convex geometries above and $E$ in general position, the
Carath\'eodory number is $d+1$ and suggest the right generalization.
Unfortunately, the following example shows that the Carath\'eodory number is
not a suitable replacement.

\begin{example}\label{ex:non_cara}
    Let $G = (V,E)$ be a connected graph with distinguished node $r \in V$.  A
    subset $F \subseteq E$ is \Def{feasible} if the edge-induced subgraph
    $G[F]$ is connected and contains $r$. A set $C \subseteq E$ is
    \Def{convex} if $E \setminus C$ is feasible. If we define $\tau(A) =
    \bigcap  \{ C: A \subset C, C \text{ convex}\}$, then $(E,\tau)$ is a
    convex geometry; see Example 2.10 in~\cite{KLS}.

    Consider the convex geometry for the graph in Figure~\ref{fig:non_cara}.
    \begin{figure}[htb]
      \centering
      \tikzset{NodeGreedoid/.style = {circle,fill=black,inner sep=1.5}}
\begin{tikzpicture}
  \coordinate (p1) at (0,0);
  \coordinate (p2) at (2,0);
  \coordinate (p3) at (4,0);
  \coordinate (p4) at (6,0);

  \node[NodeGreedoid,label=left:{$r$}] (1) at (p1){};
  \node[NodeGreedoid] (2) at (p2){};
  \node[NodeGreedoid] (3) at (p3){};
  \node[NodeGreedoid] (4) at (p4){};
  
  \draw[bend left] (1) to node[above] {3} (2);
  \draw[bend right] (1) to node[below] {4} (2); 

  \draw[bend left] (2) to node[above] {1} (3);
  \draw[bend right] (2) to node[below] {2} (3);

    \draw[] (3) to node[below] {0} (4);
  
\end{tikzpicture}
\caption{Graph for Example~\ref{ex:non_cara}.}\label{fig:non_cara}
    \end{figure}
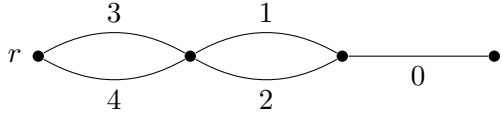
    It can be checked that the Carath\'eodory number for this convex geometry
    is $2$. Indeed, $\{0,1,2\} \subset \tau(\{3,4\})$, $0 \in \tau(\{1,2\})$
    and $3,4$ are the extreme points of $\tau(E)$.  Let $C_1 = \{1,2\}$ and
    $C_2 = \{3,4\}$ be color classes. Then $0 \in \tau(C_1) \cap \tau(C_2)$
    but $0 \not \in \tau(\{p_1, p_2\})$ for any choice $p_i \in C_i$.
\end{example}

The Radon number for this example is $3$. It would be interesting to know if
the Radon number is a suitable replacement for dimension.

Let us finally note that the topological approach of~\cite{KalaiMeshulam:2005}
and~\cite{Holmsen:2016} does not apply. For a convex geometry $(E,\tau)$ and
$p \in E$, one defines the support complex $\Delta_p := \{ A \subseteq E : p
\not\in \tau(A)\}$. However, Propositions~\ref{prop:trop_anti}
and~\ref{prop:support} imply that any simplicial complex is the support
complex of a convex geometry.

\subsection{Examples and Matroid simplicial depth}
It turns out that, as compared to the Colorful Carath\'eodory
Theorem~\ref{thm:Barany}, it is quite difficult to construct non-trivial
instances for Theorem~\ref{thm:KM} where $M$ is not the partition matroid.

Our reformulation of Theorem~\ref{thm:KM} in Theorem~\ref{thm:reformulated+KM} shows
that the conclusion is trivial for $\Cocircuits \cap \Ind \neq \emptyset$. This is
equivalent to the statement that for every basis $B \in \Bases$ there is a
basis $B' \in \Bases$ with $B \cap B' = \emptyset$. For example, if $M$ is a
graphical matroid associated to a graph $G$, then for every node $v \in V(G)$,
the edges incident to $v$ give a cocircuit, which can be completed to a basis.
So graphical matroids do not give non-trivial instances for
Theorem~\ref{thm:KM} independent of the choice of $V : E \to \RR^d$.

\begin{question}
    Find a natural class of instances of Theorem~\ref{thm:KM}.
\end{question}

Deza et al.~\cite{DezaEtal2006} introduced the \Def{colorful simplicial depth}
of a colorful configuration $C_1,\dots,C_{d+1} \subset \RR^{d}$ as the maximal
number of colorful simplices intersecting in a point. They conjectured upper
and lower bounds for color configurations in $\RR^d$ that we shown to be true;
see~\cite{AdiprasitoPadrolSanyal:2019, Sarrabezolles2015}. For any fixed
matroid $M$, one can introduce the analogous notion of \emph{matroid
simplicial depth}.

\begin{question}
    What are lower and upper bounds on the matroid simplicial depth?
\end{question}

The techniques in~\cite{AdiprasitoPadrolSanyal:2019} give an approach: For $M =
(E,\Ind)$ and $V : E \to \RR^d$, define the \Def{M-avoiding complex}
\[
    \Delta_M \ := \ \{ I \in \Ind : 0 \not\in \conv(V(I)) \} \ = \ \Delta_V
    \cap \Ind \, .
\]
This is a subcomplex of the independence complex $\Ind$ and the proof of
Lemma~2.2 in~\cite{AdiprasitoPadrolSanyal:2019} yields

\begin{proposition}
    Let $M$ be a matroid and $V : E \to \RR^d$. Then the matroid simplicial
    depth is bounded from above by
    \[
        (-1)^{|E| - \rho(E)}\mu(M^*) + \tilde\beta_{d-1}(\Delta_M) \, ,
    \]
    where $\mu(M^*)$ is the M\"obius invariant of the dual matroid and $\tilde
    \beta_{d-1}$ is the reduced Betti number (over $\ZZ_2$) of $\Delta_M$.
\end{proposition}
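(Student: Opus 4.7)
The plan is to follow the template of Lemma~2.2 in~\cite{AdiprasitoPadrolSanyal:2019}. First, by truncating $M$ I may assume that $\rho(E) = d+1$, so that the independence complex $\Ind$ is pure of dimension $d$ and its facets are precisely the bases of $M$; then the matroid simplicial depth equals $f_d(\Ind) - f_d(\Delta_M)$, i.e., the number of top-dimensional faces of $\Ind$ not contained in $\Delta_M$. Two structural facts drive the argument: $\Ind$ is shellable (Bj\"orner), hence its reduced $\ZZ_2$-homology is concentrated in degree $d$, and the standard identity $\tilde\beta_d(\Ind) = (-1)^{|E|-\rho(E)}\mu(M^*)$ expresses its top Betti number through the M\"obius invariant of the dual matroid.

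The core homological step is the long exact sequence in reduced $\ZZ_2$-homology for the pair $(\Ind,\Delta_M)$,
\[
    \cdots \to \tilde H_d(\Delta_M) \to \tilde H_d(\Ind) \to H_d(\Ind,\Delta_M) \to \tilde H_{d-1}(\Delta_M) \to \tilde H_{d-1}(\Ind) = 0,
\]
where the vanishing on the right uses shellability of $\Ind$. Counting dimensions along the sequence gives
\[
    \dim H_d(\Ind,\Delta_M) \le \tilde\beta_d(\Ind) + \tilde\beta_{d-1}(\Delta_M) = (-1)^{|E|-\rho(E)}\mu(M^*) + \tilde\beta_{d-1}(\Delta_M).
\]

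To convert this into a bound on the simplicial depth, I build $\Ind$ from $\Delta_M$ in two stages: first adjoin every face of $\Ind$ of dimension at most $d-1$, and then attach the $d$-facets of $\Ind \setminus \Delta_M$ one at a time. Each such attachment is a relative $d$-cell whose boundary already lies in the current complex, so it either creates a new class in top homology or kills a class in degree $d-1$. Accounting for the attachments identifies the simplicial depth with a difference of top and penultimate Betti numbers across the enlargement, and the previously established bound on $\dim H_d(\Ind,\Delta_M)$ then yields the proposition.

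The main obstacle is precisely this last step: $\Delta_M$ need not contain every lower-dimensional face of $\Ind$, because some independent set $I$ of size at most $d$ may satisfy $0 \in \conv V(I)$, so the relative boundary map $\partial_d^{\mathrm{rel}}\colon C_d(\Ind,\Delta_M) \to C_{d-1}(\Ind,\Delta_M)$ is not automatically zero. Transferring the filtration-and-bookkeeping technique of~\cite[Lem.~2.2]{AdiprasitoPadrolSanyal:2019}, in which purity of $\Ind$ and concentration of its homology in top degree play the role of the classical colorful configuration, is precisely what closes the argument.
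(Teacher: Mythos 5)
Your proposal follows essentially the same route as the paper: the paper's proof consists of exactly two ingredients --- the homological bookkeeping of Lemma~2.2 of Adiprasito--Padrol--Sanyal, asserted to carry over verbatim, and Bj\"orner's identification of the top (reduced) homology of the independence complex as $(-1)^{|E|-\rho(E)}\mu(M^*)$ --- and these are precisely the shellability and long-exact-sequence inputs you isolate. The only caveat is that your two-stage attachment sketch, taken literally, bounds the depth by $\tilde\beta_{d-1}$ of $\Delta_M$ together with the full $(d-1)$-skeleton of $\Ind$ rather than of $\Delta_M$ itself (adjoining the missing low-dimensional faces can create new $(d-1)$-cycles); you correctly flag this as the step that must be imported from the cited lemma, which is no less than what the paper's own one-line proof does.
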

\begin{proof}
    The proof of Lemma~2.2 in~\cite{AdiprasitoPadrolSanyal:2019} verbatimly
    carries over. The missing ingredient is the Euler characteristic of
    $\Ind$. This was determined in~\cite[Prop.~7.4.7]{bjorner} to be 
    $(-1)^{|E| - \rho(E)} \mu(M^*)$.
\end{proof}

To show that this upper bound is tight for the partition matroid it is shown
that any configuration $V : E \to \RR^d$ can be transformed to a natural
configuration that attains the bound and such that in every step the colorful
simplicial depth does not decrease.

\subsection{Signed tropical colorful linear programming}

The notion of tropical convexity considered here is restricted to the tropical non-negative numbers, as $x \geq -\infty$ for all $x \in \TTmax$. Recently in~\cite{LohoVegh:2020}, the extended concept of signed tropical convexity has been developed.
Instead of considering only points with coordinates in $\TTmax$, one glues the non-negative numbers $\TTmax$ with a negative copy $\ominus \TTmax$ at $-\infty$ arriving at $\mathbb{T}_{\pm} = \RR \cup -\infty \cup \ominus\RR$. 

\begin{question}
  Can Theorem~\ref{thm:tropicalKM} and Theorem~\ref{thm:tropical+weak+Holmsen} be generalized to signed tropical convexity? 
\end{question}

It was shown that deciding the containment of the origin $(-\infty,\dots,-\infty)$ in the convex hull of finitely many point in $\mathbb{T}_{\pm}^d$ is equivalent to tropical linear programming. 

We introduced tropical colorful linear programming in Section~\ref{sub:tropicalized+CLP} where no variable occurs on the right hand side in~\eqref{eq:solution+matching}.
Considering the containment for $(-\infty,\dots,-\infty)$ instead of $(0,\dots,0)$ in the sense of signed tropical convexity yields a generalization of the two-sided version of tropical linear programming and our one-sided version of tropical colorful linear programming.

\begin{corollary}
  Signed tropical colorful linear programming is NP-complete. 
\end{corollary}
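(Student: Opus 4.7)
The plan is to combine routine NP-membership with NP-hardness inherited from Theorem~\ref{prop:one+sided+colourful}, using the correspondence recorded in the paragraph preceding the corollary.

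For NP-membership, a certificate consists of a choice of one index per color class together with the (finite) signed tropical values of the corresponding variables. Evaluating the signed tropical matrix-vector product and checking that the result equals $(-\infty,\dots,-\infty) \in \mathbb{T}_{\pm}^d$ can be done in time polynomial in the bit-size of the input, so the problem lies in NP.

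For NP-hardness, I appeal to the observation recorded just before the corollary: signed tropical colorful linear programming \emph{contains} the one-sided tropical colorful linear programming of Section~\ref{sub:tropicalized+CLP} as a special case. Concretely, given an instance $A \in \TTmax^{d\times n}$ with partition $C_1 \sqcup \dots \sqcup C_r = [n]$ of the one-sided problem from Theorem~\ref{prop:one+sided+colourful}, I regard the columns of $A$ as points of $\mathbb{T}_{\pm}^d$ lying in the non-negative part $\TTmax^d$, and encode the right-hand side $\Zero$ by augmenting the configuration with $d$ auxiliary negative points from $\ominus \TTmax^d$, each placed in its own singleton color class. This polynomial-time construction produces a signed instance whose feasibility is equivalent to the original one-sided instance together with the colored-support constraint.

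The main obstacle — the step I would check carefully — is the soundness of this encoding, namely that containment of $(-\infty,\dots,-\infty)$ in the signed tropical convex hull of the augmented configuration captures exactly the system $A \odot x = \Zero$ with $|\supp(x) \cap C_i| = 1$. This is the content of the equivalence between tropical linear programming and signed origin-containment established in~\cite{LohoVegh:2020}; once it is invoked, NP-hardness of signed tropical colorful linear programming transfers immediately from Theorem~\ref{prop:one+sided+colourful}, completing the proof.
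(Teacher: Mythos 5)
Your argument is essentially the paper's: the corollary is stated there without a proof, as an immediate consequence of the observation in the preceding paragraph that signed tropical colorful linear programming with target $(-\infty,\dots,-\infty)$ generalizes the one-sided colorful problem of Section~\ref{sub:tropicalized+CLP}, so NP-hardness transfers from Theorem~\ref{prop:one+sided+colourful} (and membership in NP is routine). Your write-up fills in that reduction correctly in spirit, but one detail needs fixing: putting $d$ negatively signed unit points into $d$ singleton color classes forces each to appear with its own finite coefficient $\mu_i$, so the balance conditions only give $A \odot x = \mu$ for an \emph{arbitrary} finite vector $\mu$ --- i.e.\ that $A\odot x$ is finite in every coordinate --- which is strictly weaker than $A\odot x = \Zero$, so the claimed equivalence with the original instance fails in general. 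The cleanest repair is to use a single auxiliary point $\ominus(0,\dots,0)$ in its own singleton class, so the balance reads $A\odot x = \mu\odot\Zero$ for one scalar $\mu$, and tropical homogeneity (replace $x$ by $(-\mu)\odot x$, which preserves $\supp(x)$) recovers $A\odot x=\Zero$; alternatively, observe that on the $0/{-\infty}$ matrices arising from $3$-dimensional matching your weaker finiteness condition already encodes exactly the covering requirement, so NP-hardness transfers regardless.
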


It was shown in~\cite{LohoVegh:2020} that the most general version of signed tropical linear programming is NP-complete, while a tamer version with non-negative variables is in NP~$\cap$~co-NP as it is equivalent to mean payoff games, see~\cite{MoeSkutStork}. 

The modeling power of non-tropical colorful linear programming was greatly demonstrated in~\cite{MeunierSarrabezolles:2018}. 

\begin{question}
  Which problems can be modeled by signed tropical colorful linear programming? 
\end{question}

\bibliographystyle{amsplain}
\bibliography{TropicalMatroidCaratheodory}

\end{document}